\documentclass{article}
\usepackage{amsthm}
\usepackage{amsmath}
\usepackage{amssymb}
\usepackage{physics}
\newtheorem{theorem}{Theorem}
\newtheorem{corollary}{Corollary}
\newtheorem{proposition}{Proposition}

\theoremstyle{definition}
\newtheorem{example}{Example}

\usepackage[ruled,titlenotnumbered,vlined]{algorithm2e}
\SetKwInput{Input}{Input}
\SetKwInput{Output}{Output}
\IncMargin{0.5em}
\SetNlSkip{0.5em}

\usepackage{comment}

\usepackage{hyperref}
\hypersetup{
    colorlinks=true,
    linkcolor=blue,
    filecolor=magenta,
    urlcolor=cyan,
    pdfpagemode=FullScreen,
    }

\usepackage[capitalize]{cleveref} 

\begin{document}

\title
{General formulas for a class of Euler sums}
\author{David H. Bailey,\\
Lawrence Berkeley National Laboratory, USA, \\
{\tt dhbailey@lbl.gov} \\
Ross McPhedran,\\
School of Physics, University of Sydney 2006, Australia,\\
{\tt ross.mcphedran@sydney.edu.au} \\
and Bruno Salvy, \\
Institut National de Recherche en Sciences et Technologies du Numérique (INRIA), \\
{\tt bruno.salvy@inria.fr}}
\maketitle

\begin{abstract}
Let $H_k = 1 + 1/2 + 1/3 + \cdots + 1/k$ denote the $k$th harmonic number. We present an easy-to-implement algorithm for the computation of explicit closed-form evaluations, in terms of the digamma and polygamma functions, for Euler sums of the form 
\begin{align}
\sum_{k=1}^\infty R(k) H_k,
\end{align}
where $R(k)$ is a rational function (quotient of two polynomials) whose denominator degree is at least two larger than the numerator degree. We apply the same method to show how the computation of a general formula for Euler sums of the form
\begin{align*}
\sum_{k=1}^\infty \frac{H_k}{(m_1 k + n_1)^{p_1} (m_2 k + n_2)^{p_2} \cdots (m_r k + n_r)^{p_r}}
\end{align*}
reduces to partial fraction decomposition. We present explicit formulae for sums with one or two terms in the denominator, with powers $p_i$ ranging up to 3, and with multipliers $m_i$ ranging up to 4. We also include results for related Euler sums such as 
\begin{align*}
\sum_{k=1}^\infty \frac{k^q H_k}{(m k + n)^p}.
\end{align*}
Computation of Euler sums directly to very high precision enables us to rigorously check the above-mentioned formulas in many specific cases. 
\end{abstract}

\section{Introduction}

An \emph{Euler sum} (also termed \emph{Euler-Zagier sum} or merely \emph{harmonic sum}) is an infinite series involving the harmonic numbers $H_k = 1 + 1/2 + 1/3 + \cdots + 1/k$. A few simple examples include $\sum_{k=1}^\infty H_k/k^2$, $\sum_{k=1}^\infty H_k^2/k^3$, $\sum_{k=1}^\infty H_k/(3k+2)^3$ and $\sum_{k=1}^\infty \sum_{j=1}^k \sum_{i=1}^j 1/(ijk)$. Such sums arise in mathematical physics, in the study of the Riemann hypothesis and in numerous other contexts. They have been studied since the time of Euler, and more recently in \cite{Ablinger2009,Ablinger2011,BBG1,BBG2,ChoiSri,FlajoletSalvy,SofoChoi,Zheng}. One notable feature of these sums is that many have surprisingly elegant analytic evaluations, for example:
\begin{align*}
\sum_{k=1}^\infty \frac{H_k}{k^{3}} &= \frac{ 5}{4} \zeta(4) = \frac{1}{72} \pi^4 \nonumber \\
\sum_{k=1}^\infty \frac{H_k}{(k+1)^{5}} &= \frac{ 1}{4}\left(  3\zeta(6) -2\zeta(3)^2\right) = \frac{1}{1260} \left( \pi^6 - 630 \zeta(3)^2\right) \nonumber \\
\sum_{k=1}^\infty \frac{H_k^3}{k^4} &= \frac{693}{48} \zeta (7)+2 \zeta (5) \zeta (2)-\frac{51}{4} \zeta (4) \zeta (3) \nonumber \\
\sum_{k=1}^\infty \frac{H_k}{(k+1)^{7}} &= \frac{ 1}{4}\left(  5\zeta(8) -4\zeta(3)\zeta(5)\right) = \frac{1}{4} \left(\frac{\pi^8}{1890} - 4 \zeta(3) \zeta(5)\right).
\end{align*}

In an earlier study \cite{McPBai2025}, we presented results for mixed-denominator Euler sums of the form
\begin{align}
\sum_{k=1}^\infty \frac{H_k^m}{k^{n_0} (k+1)^{n_1} (k+2)^{n_2} \cdots (k+t)^{n_t}},
\end{align}
for nonnegative integers $m$ and $(n_i)$, with $m \geq 1$ and $n_0 + n_1 + \cdots + n_t \geq 2$. We showed that these have closed-form evaluations in terms of the Riemann zeta function, at least when $m + n_0 + \cdots + n_t \leq 12$. We also presented a catalog of several hundred formulas, produced using high-precision numerical methods.

In this article, we illustrate a general approach, based on residues, to evaluate sums of the form
\[\sum_{k=1}^\infty R(k) H_k, \]
where $R(k)$ is any rational function with denominator degree at least two larger than the numerator degree, and no poles at integers except possibly at 0. This leads to explicit formulas and techniques for sums such as
\[\sum_{k=1}^\infty \frac{H_k}{(m_1 k + n_1)^{p_1} (m_2 k + n_2)^
{p_2} \cdots (m_r k + n_r)^{p_r}} \quad \text{and} \quad \sum_{k=1}^\infty \frac{k^q H_k}{(m_1 k + n_1)^{p_1}}\]
for arbitrary $(m_i,n_i,p_i, q)$, provided the sum is convergent. We present explicit formulae, expressed in terms of the digamma and polygamma functions, for sums with one or two terms in the denominator, with powers $p_i$ ranging up to 3, and with multipliers $m_i$ ranging up to 4.

\section{Formulas by residue computation}

\subsection{Background on the \texorpdfstring{$\psi$}{psi} function}
The $\psi$ function (also known as the \emph{digamma function}) is the logarithmic derivative of the gamma function:  $\psi(z) = \Gamma'(z)/\Gamma(z)$. Many formulas for $\psi$ follow by differentiation from the recurrence formula $\Gamma(s+1)=s\Gamma(s)$, the reflection formula~$\Gamma(s)\Gamma(1-s)=\pi/\sin(\pi s)$ and the duplication formula $\Gamma(2s)=\pi^{-1/2}2^{2s-1}\Gamma(s)\Gamma (s+1/2)$, as well as its generalization known as Gauss' multiplication formula, see \cite[ch.~5]{dlmf}. In particular, this last formula gives values of~$\psi(p/q)$ at rational points~\cite[5.4.19]{dlmf}.

The $\psi$ function is meromorphic (i.e., analytic except for a discrete set of poles), with simple poles at nonpositive integers. In a neighborhood of $s=0$, it possesses the expansion~\cite[5.5.2 and 5.7.4]{dlmf}
\begin{equation}\label{eq:psi_at_0}
\psi(-s)+\gamma=\frac1s-\sum_{k\ge2}\zeta(k)s^{k-1},
\end{equation}
where $\gamma = 0.5772156649\ldots$ is Euler's constant and $\zeta (k)=1+1/2^k+1/3^k+\dotsb$ is the Riemann zeta function. The $\psi$ function is closely related to the harmonic numbers: A direct consequence of the recurrence formula for~$\Gamma$ is that for $n$ a positive integer, $\psi(n+1)=H_n - \gamma$.

The polygamma function is merely the $n$-th derivative of the digamma function: $\psi^{(n)} (z)$, also denoted $\psi_n(z)$ or $\psi(n,z)$; note that $\psi(z) = \psi_0(z) = \psi(0,z)$. For positive integer $n$, the polygamma function can be written in terms of the Hurwitz zeta function as $\psi^{(n)}(z) = (-1)^{n+1} n! \zeta (n+1, z)$.

Some interesting special values of the digamma and polygamma functions include \cite{Choi2007}:
\begin{align}
\psi(1/2) &= - \gamma - 2 \log 2 \quad &  \psi(1/4) &= -\gamma - 3\log (2) - \pi/2  \nonumber \\
\psi^{(1)}(1) &= \pi^2 / 6 & \quad \psi^{(1)}(1/2) &= \pi^2/2 \nonumber \\
\psi^{(2)}(1) &= -2 \zeta(3) \quad & \psi^{(2)}(1/2) &= -14 \zeta(3) \nonumber \\
\psi^{(2)}(1/4) &= -2 \pi^3 - 56 \zeta(3) \quad & \psi^{(3)}(1/2) &= \pi^4
\end{align}

\subsection{Harmonic sums by residue computation}

An interesting consequence of the behavior of $\psi$ in the neighborhood of negative integers is the following:
\begin{proposition} 
\label{thm:lem1}
Let $R(z)$ be a rational function (quotient of two polynomials) that is $O(z^{-2})$ at infinity with no poles at integers except possibly at 0. Then
\begin{equation}
\sum_{k=1}^\infty R(k) H_k = 
\frac12\sum_{\alpha \text{ pole of } R(s)}
\Res\!\left[R'(s)
(\psi(-s)+\gamma)-R(s) (\psi(-s) + \gamma)^2\right]_{s=\alpha}, \label{form:BSlemma}
\end{equation}
where $\Res [Q(s)]_{s=\alpha}$ denotes the residue, in the complex analysis sense, of the function $Q(s)$ at $s = \alpha$.
\end{proposition}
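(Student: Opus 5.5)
The plan is to integrate $F(s) = R'(s)\phi(s) - R(s)\phi(s)^2$, where $\phi(s) := \psi(-s)+\gamma$, over large circles $C_N$ centered at $0$ with radius $N+\tfrac12$. I will show that the integral tends to $0$, so the sum of all residues of $F$ vanishes. The residues at the positive integers will then produce $-2H_nR(n)$, and the remaining residues, at the poles of $R$, give the right-hand side of \eqref{form:BSlemma}. Convergence of the left-hand side is immediate, since $H_k R(k) = O(\log k / k^2)$.

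\emph{Local analysis at nonnegative integers.} From the reflection formula $\psi(1-z)-\psi(z)=\pi\cot\pi z$ with $z=-s$, we get $\psi(-s)=\psi(1+s)+\pi\cot\pi s$. Near $s=n\ge 0$, write $t=s-n$. Since $\pi\cot\pi s = 1/t + O(t)$ and $\psi(1+n)+\gamma = H_n$ (with $H_0=0$), this gives
\[
\phi(s) = \frac1t + H_n + c_n t + O(t^2), \qquad \phi(s)^2 = \frac1{t^2} + \frac{2H_n}{t} + O(1).
\]
Now take $n\ge1$, where $R$ is analytic, and expand $R(s)=R(n)+R'(n)t+\dots$. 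The residue of $R'\phi$ at $n$ is $R'(n)$, and the residue of $R\phi^2$ at $n$ is $R'(n)+2H_nR(n)$. Hence $\operatorname{Res}_{s=n}F = -2H_nR(n)$. At $s=0$ the same computation gives residue $0$ if $R$ is regular there, since $H_0=0$. If $R$ has a pole at $0$, that residue is already included among the ``poles of $R$''. Apart from these, the only other singularities of $F$ are the poles of $R$, because $\phi$ is analytic off the nonnegative integers. The residue theorem therefore gives
\[
\frac{1}{2\pi i}\oint_{C_N}F(s)\,ds = -2\sum_{k=1}^{N}H_kR(k) + \sum_{\alpha\ \text{pole of}\ R}\operatorname{Res}_{s=\alpha}F
\]
once $N$ exceeds the moduli of all poles of $R$.

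\emph{Vanishing of the contour integral (main obstacle).} I expect the only real work to be a uniform bound $\phi(s)=O(\log N)$ on $C_N$. I will split the circle into two halves.
\begin{itemize}
\item On $\operatorname{Re}s\le 0$ we have $\operatorname{Re}(-s)\ge0$ and $|s|$ large. Stirling's asymptotics $\psi(w)=\log w+O(1/w)$, valid uniformly for $|\arg w|\le \pi/2$, give $\psi(-s)=O(\log N)$.
\item On $\operatorname{Re}s\ge0$, use $\psi(-s)=\psi(1+s)+\pi\cot\pi s$. Here $\psi(1+s)=O(\log N)$ by Stirling again.
\end{itemize}
It remains to bound $\pi\cot\pi s$ in the second case. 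The circle crosses the real axis at the half-integer $N+\tfrac12$, and $\cot$ is bounded on the set of points at distance at least $1/4$ from the integers. Points of $C_N$ near the real axis stay a fixed distance from $\mathbb{Z}$ because the circle meets the axis at a half-integer, while points far from the axis have $\cot$ bounded as well. Hence $\pi\cot\pi s = O(1)$ there.

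Since $R=O(N^{-2})$ and $R'=O(N^{-3})$ on $C_N$, we get $|F| = O(N^{-2}\log^2N)$. Multiplying by the length $O(N)$ of the contour, the integral tends to $0$. Letting $N\to\infty$ and dividing by $2$ yields \eqref{form:BSlemma}.
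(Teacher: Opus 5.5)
Your proof is correct, and it reaches the result by a more self-contained route than the paper.

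\textbf{The paper's route.} The paper does no contour integration itself. It quotes two residue identities of Flajolet and Salvy, $\sum_{k\ge1} r(k) = -\mathcal R[r(s)(\psi(-s)+\gamma)]$ and $2\sum_{k\ge1} r(k)H_k + \sum_{k\ge1} r'(k) = -\mathcal R[r(s)(\psi(-s)+\gamma)^2]$, where $\mathcal R$ sums the residues at the poles of $r$ and at $0$. It applies the first with $r=R'$ and the second with $r=R$, and subtracts. It then uses $\psi(-s)+\gamma = 1/s - \sum_{k\ge2}\zeta(k)s^{k-1}$ to check that the combined kernel has residue $0$ at $s=0$ when $R$ is regular there.

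\textbf{Your route.} You prove the combined identity directly. Your kernel $F=R'\phi-R\phi^2$ makes the $R'(n)$ terms cancel at every positive integer, leaving $-2H_nR(n)$. At $s=0$ the same computation gives $-2H_0R(0)=0$, so the paper's separate check at $0$ appears in your argument as a special case of a uniform computation via the reflection formula. Your bound $\phi=O(\log N)$ on $C_N$, from Stirling plus reflection, is exactly the ingredient hidden inside the cited lemma.

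\textbf{Comparison.} Your version is transparent, independent of the reference, and never needs $\sum R'(k)$ as a separate quantity. The paper's version is shorter and sits inside the Flajolet--Salvy kernel framework. That framework is what the paper's conclusion relies on for extensions to $H_k^2$, alternating, or $H_{2k}$ sums.

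\textbf{One tightening.} The bound on $\pi\cot\pi s$ is cleanest stated as follows. By the triangle inequality, every point of $C_N$ is at distance at least $1/2$ from every integer, since $|s-m|\ge |s|-|m|$ for $|m|\le N$ and $|s-m|\ge |m|-|s|$ for $|m|\ge N+1$. On that set $\cot\pi s$ is bounded, by periodicity and its limits $\mp i$ as $\operatorname{Im}s\to\pm\infty$.
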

\begin{proof}
This is a direct corollary of \cite[Eqs.~(2-6) and (2-9)]{FlajoletSalvy}:
\begin{align*}
\sum_{k=1}^\infty{r(k)}&=-\mathcal R[r(s)(\psi(-s)+\gamma)],\\
2\sum_{k=1}^\infty{r(k)H_k}+\sum_{k=1}^\infty{r'(k)}&=
-\mathcal R[r(s)(\psi(-s)+\gamma)^2],
\end{align*}
where $\mathcal R[r(s)\xi(s)]$ with $\xi\in\{\psi (-s)+\gamma,(\psi(-s)+\gamma)^2\}$ denotes the sum of the residues of $r(s)\xi(s)$ at the poles of~$r(s)$ and at~0.

Combining these two formulas gives the sum in the proposition. With this combination, the pole at~0 does not need to be considered separately: if $R(s)$ does not have a pole at~0, then from  \cref{eq:psi_at_0} and its derivative, we get 
\begin{align*}
-R(s)(\psi(0,-s)+\gamma)^2 &= -R(0)/s^2 - R'(0)/s + O(1),\\
R'(s)(\psi(0,-s)+\gamma) &=R'(0)/s+O(1).
\end{align*}
Thus the residue at~0, namely the coefficient of $1/s$, is~0.
\end{proof}

\begin{example}
We first illustrate the algorithm that follows from \cref{thm:lem1} for the rational function~$R(s)=1/(2s-1)^2$. Its only pole is at~$\alpha=1/2$. At this point, we use the expansion
\[\psi(-s)+\gamma=2(1-\ln 2)-\left(\frac{\pi^2}2+4\right)(s-1/2)+
(8-7\zeta(3))(s-1/2)^2+O((s-1/2)^3).\]
It follows that the residues of $-R(s)(\psi(-s)+\gamma)^2$ and $R'(s) (\psi(-s)+\gamma)$ at~$s=1/2$ are
\[(1-\ln 2)\left(\frac{\pi^2}2+4\right) \quad\text{and}\quad \frac72\zeta(3)-4, \]
which gives
\[\sum_{k=1}^\infty{\frac{H_k}{(2k-1)^2}}=\frac{\pi^2}4-2\ln 2-\frac{\pi^2\ln 2}4+\frac74\zeta(3).\]
\end{example}
\subsection{Algorithm}
When turning \cref{thm:lem1} into an algorithm suitable for implementation in a computer algebra system, it is important to observe that exact or approximate expressions of the poles of~$R(s)$ are not needed for the computation of the residues; they can be obtained in a purely symbolic way. Assume that $R(s)=P(s)/Q(s)$ with $P,Q$ polynomials that are relatively prime (this can ensured by reducing the fraction to a common denominator and dividing numerator and denominator by their gcd). If~$\alpha$ is a root of~$Q$ of multiplicity~$p\in\mathbb N$, then the Taylor expansion of~$Q(s)$ as~$s\rightarrow\alpha$ factors as
\[Q(s)=(s-\alpha)^p\left(\frac{Q^{(p)}(\alpha)}{p!} + \frac{Q^{(p+1)} (\alpha)} {(p+1)!}(s-\alpha)+\dotsb\right).\]
Then computing the Taylor expansions of $P(s)$, $\psi(-s)+\gamma$ and the reciprocal of the
second term in the product above, all at precision~$O((s-\alpha)^{p+1})$ is sufficient to recover the residues required by the lemma. 

From the computer algebra point of view, a natural strategy is to start from a \emph{square-free factorization}  of~$Q$, ie, a factorization of the form~$Q=Q_1Q_2^2\dotsm Q_r^r$ where the polynomials~$Q_i$ do not have multiple roots and are relatively prime. Such a factorization can be computed efficiently using only gcd computations~\cite[\S14.6]{GathenGerhard}. Then, for each~$p=1,\dots,r$, one computes a residue as described above for~$Q_p$ and sums the result symbolically over all roots of~$Q_p$. This approach completely avoids factorization and is usually the preferred one in computer algebra algorithms. 

However, for users whose rational functions~$R(s)$ have simple expressions in~$\mathbb Q(s)$, more explicit formulas are obtained by first factoring the denominator into irreducible factors in~$\mathbb Q[s]$ and then proceeding as above. This is the choice made in our implementation presented in Appendix~\ref{app:maple_code} and results in Algorithm~\ref{algo:algo1}.

\begin{algorithm}[t]
\DontPrintSemicolon
\SetAlgoRefName{HarmonicSum}
\Input{$R=P/Q$ a rational function\\ 
\qquad with $P,Q$ in $\mathbb Q[s]$,
$\gcd(P,Q)=1$, $\deg Q-\deg P\ge2$ and $0\not\in Q(\mathbb
N\setminus\{0\})$}
\Output{$\sum_{k\ge1}R(k)H_k$, where $H_k$ is the
$k$th harnomic number}
$q_1^{e_1}\dotsm q_m^{e_m}:=\operatorname{factor}(Q)$\tcp*{$e_i>0$,
$q_i\in\mathbb Q[s]$ irreducible}
\For(\tcp*[f]{loop over factors}){$i:=1$ \KwTo $m$}{
    $r:=Rq_i^{e_i}$ \tcp*{rest of the fraction}
    $q:=(q_i(s)-q_i(\alpha))/(s-\alpha)\in\mathbb Q(\alpha)[s]$
    \tcp*{$\alpha$ stands for the pole}
    $S:=\operatorname{Taylor}(r/q,s=\alpha,e_i+1)$
    \tcp*{Taylor expansion modulo $O((s-\alpha)^{e_i+1})$}
    $S:=\operatorname{Laurent}(S/(s-\alpha)^{e_i},s=\alpha,1)$
    \tcp*{Laurent expansion of $R$ modulo $O(s-\alpha)$}
    $S:=\operatorname{Laurent}((\psi(-s)+\gamma)S'-(\psi
    (-s)+\gamma)^2S,s=\alpha,0)$
    \tcp*{expansion modulo $O((s-\alpha)^0)$}
    $c:=\operatorname{coeff}(S,s-\alpha,-1)$
    \tcp*{residue from \cref{thm:lem1}}
    $v_i:=\frac12\sum_{\alpha\mid q_i(\alpha)=0}{c}$
    \tcp*{sum over conjugate roots of $q_i$}
}
\Return{$\sum_{i=1}^m{v_i}.$}
\tcp*{sum the contributions from all poles}
\caption{Sum harmonic number times rational function
\label{algo:algo1}}
\end{algorithm}

\begin{example}\label{ex:2km1square}
With our Maple implementation, the previous example is obtained as 
\begin{verbatim}
> expand(harmonicsum(1/(2*k-1)^2,k));
\end{verbatim}
\[{\color{blue}
\frac{7 \zeta \! \left(3\right)}{4}+\frac{\pi^{2}}{4}-2 \ln \! \left(2\right)-\frac{\ln \! \left(2\right) \pi^{2}}{4}
}
\]
\end{example}
\begin{example}\label{form:illcor3}
Here is an example with multiple poles
\[
\sum_{k=1}^\infty \frac{H_k}{(2k+1)^3 (3k+1)^2}. 
\]
Our implementation provides the following nice expression
\begin{verbatim}
> expand(harmonicsum(1/(2*k+1)^3/(3*k+1)^2,k));
\end{verbatim}
\vskip-1ex
{\color{blue}
\begin{multline*}
\frac{\pi^{4}}{16}
+42 \zeta \! \left(3\right)
+\frac{27 \pi^{2}}{2}
-7 \ln \! \left(2\right) \zeta \! \left(3\right)
-6 \ln \! \left(2\right) \pi^{2}
-108 \ln \! \left(2\right)^{2}\\
+27 \left(
    -\frac{\pi  \sqrt{3}}{6}-
    \frac{3 \ln \! \left(3\right)}{2}\right)^{2}
+\left(-27-\frac{\pi\sqrt{3}}{2}-\frac{9 \ln \! \left(3\right)}{2}\right)
    \psi'\! \left(\frac{1}{3}\right)
-\frac{3}2 \psi''\! \left(\frac{1}{3}\right)
\end{multline*}
}    
\end{example}
\begin{example}
An example that would be more involved by hand is
\[\sum_{k=1}^\infty{\frac{H_k}{(k^2+3k+1)^2}}\]
which is obtained as
\begin{verbatim}
> harmonicsum(1/(k^2+3*k+1)^2,k);
\end{verbatim}
\vskip-1ex
{\color{blue}
\begin{multline*}
-\frac{\sqrt{5}\, \psi \! \left(\frac{3}{2}+\frac{\sqrt{5}}{2}\right)^
{2}}{25}
-\frac{2 \sqrt{5}\, \gamma  \psi \! \left(\frac{3}{2}+\frac{\sqrt{5}}
{2}\right)}{25}
+\frac{\psi'\! \left(\frac{3}{2}+\frac{\sqrt{5}}
{2}\right) \psi \! \left(\frac{3}{2}+\frac{\sqrt{5}}{2}\right)}{5}\\
+\frac{\sqrt{5}\, \psi \! \left(-\frac{\sqrt{5}}{2}+\frac{3}
{2}\right)^{2}}{25}
+\frac{2 \sqrt{5}\, \gamma  \psi \! \left(-\frac{\sqrt{5}}{2}+\frac{3}
{2}\right)}{25}
+\frac{\psi'\! \left(-\frac{\sqrt{5}}{2}+\frac{3}
{2}\right) \psi \! \left(-\frac{\sqrt{5}}{2}+\frac{3}{2}\right)}{5}\\
-\frac{\psi''\! \left(-\frac{\sqrt{5}}{2}+\frac{3}
{2}\right)}{10}
-\frac{\psi''\! \left(\frac{3}{2}+\frac{\sqrt{5}}
{2}\right)}{10}\\
+\left(-\frac{\sqrt{5}}{25}+\frac{\gamma}{5}\right) \psi'\! \left(-
\frac{\sqrt{5}}{2}+\frac{3}{2}\right)+\left(\frac{\sqrt{5}}{25}+
\frac{\gamma}{5}\right) \psi'\! \left(\frac{3}{2}+\frac{\sqrt{5}}{2}\right)
\end{multline*}
}
\end{example}

\section{General formulas} \label{sec:theorems}
Algorithm \ref{algo:algo1} can deal with an arbitrary rational function~$R(s)\in\mathbb Q(s)$, provided the sum converges. It can easily be extended to parameterized rational functions in, say, $\mathbb Q(a_1,\dots,a_m)(s)$ for parameters~$a_1,\dots,a_m$. While it cannot handle poles whose order is a parameter, the method used by the algorithm can be ``run by hand'' and produce similar formulas in that case. But for relatively simple $R(s)$, useful general formulas can be derived, as we will show in the following.

\subsection{Simple poles}
The computation of the contribution of a simple pole of~$R$ to the sum in \cref{form:BSlemma} is straightforward. If $R(s)=P(s)/Q(s)$ with $P,Q$ polynomials that are relatively prime (this can be checked by a gcd computation), then as $s\rightarrow\alpha$,
\begin{gather*}
R(s)=\frac{c_\alpha}{s-\alpha}+O(1),\quad 
R'(s)=-\frac{c_\alpha}{(s-\alpha)^2}+O(1),\\
\psi(-s)+\gamma=\psi(-\alpha)+\gamma -\psi'(-\alpha)(s-\alpha)+O(
(s-\alpha)^2),
\end{gather*}
with $c_\alpha=P(\alpha)/Q'(\alpha)$. As a consequence,
\[
\Res[R'(s)(\psi(-s)+\gamma)-R(s)(\psi(-s)+\gamma)^2]_{s=\alpha}
=
c_\alpha(\psi'(-\alpha)-(\psi(-\alpha)+\gamma)^2).
\]
Using partial fraction decomposition then gives the following.
\begin{theorem} \label{thm:thm1}
For $t$ not an integer, define
\begin{align}
T(t,1) &= \frac{1}{2} \left(\psi'(t)-2\gamma\psi(t)-\psi(t)^2\right). \label{form:theorem4b}
\end{align}
Let $(t_1, t_2, \cdots, t_r)$, with $r \geq 2$, be distinct nonintegers, and let $(C_1, C_2, \cdots, C_r)$ be complex numbers that satisfy $C_1 + C_2 + \cdots + C_r = 0$ (necessary for convergence). Then
\begin{equation}
\sum_{k=1}^\infty H_k \left(\frac{C_1}{k+t_1} + \frac{C_2}{k+t_2} + \cdots + \frac{C_r}{k+t_r}\right) = \sum_{j=1}^r C_j T(t_j,1). \label{form:theoremx}
\end{equation}
\end{theorem}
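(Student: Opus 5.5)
We apply \cref{thm:lem1} to the rational function
\[R(s)=\sum_{j=1}^r \frac{C_j}{s+t_j}.\]
The first step is to check the hypotheses. Since the $t_j$ are nonintegers, $R$ has no poles at integers. Expanding at infinity gives
\[R(s)=\Bigl(\sum_j C_j\Bigr)s^{-1}+O(s^{-2}),\]
so the condition $C_1+\dots+C_r=0$ is exactly what makes $R(s)=O(s^{-2})$. Hence the proposition applies, and the sum on the left of \cref{form:theoremx} equals half the sum of the residues at the poles $\alpha_j=-t_j$.

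The second step computes these residues. The $t_j$ are distinct, so each pole is simple, with $c_{\alpha_j}=C_j$: after the $C_j$ with $C_j=0$ are discarded, the partial fraction form is already reduced. By the simple-pole computation just before the theorem, the residue at $\alpha_j=-t_j$ is
\[C_j\bigl(\psi'(t_j)-(\psi(t_j)+\gamma)^2\bigr).\]
Summing over $j$ and halving gives
\[\frac12\sum_j C_j\bigl(\psi'(t_j)-\psi(t_j)^2-2\gamma\psi(t_j)-\gamma^2\bigr).\]

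The final step removes the extra term $-\tfrac12\gamma^2\sum_j C_j$, which vanishes because $\sum_j C_j=0$. What remains is precisely $\sum_j C_jT(t_j,1)$, as claimed.

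There is no real obstacle here. The only point needing care is the residue of the product $R'(s)(\psi(-s)+\gamma)$ at a simple pole: it uses the linear term of the expansion of $\psi(-s)+\gamma$, whose derivative in $s$ is $-\psi'(-s)$, so the sign of the $\psi'$ contribution must be tracked carefully. This was already carried out in the text preceding the theorem.
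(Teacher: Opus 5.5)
Your proposal is correct and follows the paper's own argument. You apply Proposition~\ref{thm:lem1} to $R(s)=\sum_j C_j/(s+t_j)$, use the simple-pole residue $C_j\bigl(\psi'(t_j)-(\psi(t_j)+\gamma)^2\bigr)$ at $s=-t_j$, and let $\sum_j C_j=0$ cancel the $\gamma^2$ term; your explicit check that this same condition is what makes $R(s)=O(s^{-2})$ is a useful detail that the paper's terse proof leaves implicit.
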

\begin{proof}
The discussion above gives the result with $(\psi'(t)- (\psi(t)+\gamma)^2)/2$ in place of $T(t,1)$. The disappearance of the term $\gamma^2$ comes from the sum of the $C_j$ being~0.
\end{proof}

\begin{example}
In this example, the denominator has complex
roots:
\begin{align}
\sum_{k=1}^\infty \frac{H_k}{k^3 + 1}.
\end{align}
Let $-1$ and $c_{\pm} = (1 \pm i \sqrt{3})/2$ be the cubic roots of $-1$, so that $(k + 1) (k - c_+) (k - c_-) = k^3 + 1$. A partial fraction decomposition yields
\begin{align}
\frac{1}{k^3 + 1} &= \frac{1}{3 (k + 1)} - \frac{c_+}{3 (k - c_+)} - 
\frac{c_-}{3 (k - c_-)}.
\end{align}
Recall that $\psi(1) = -\gamma$ and $\psi'(1) = \pi^2/6$. Then
\begin{align*}
\sum_{k=1}^\infty \frac{H_k}{k^3 + 1} &= \frac{1}{3} T(1,1) - 
\frac{c_+}{3} T(-c_+,1) - \frac{c_-}{3} T(-c_-,1) \\
&=\frac16\left(\frac{\pi^2}6+\gamma^2
-\sum_{\epsilon\in\{+,-\}}c_{\epsilon}(\psi'(-c_\epsilon)-2\gamma\psi
(-c_\epsilon)-\psi (-c_\epsilon)^2)
\right)\\
&= 0.828902143400992508742\ldots.
\end{align*}
\end{example}

\subsection{A unique pole of higher order}
The next theorem is a variant of a formula that we initially discovered experimentally, using a combination of \emph{Wolfram Mathematica}, the Online Encyclopedia of Integer Sequences \cite{oeis} and high-precision numerical computing. 

\begin{theorem} \label{thm:thm2}
For $t$ not an integer, and integer $p \geq 1$, define
\begin{equation}
T(t,p)=
\frac{(-1)^{p-1}}{2(p-1)!}\left(\psi^{(p)}(t)
-2\gamma\psi^{(p-1)}(t)-\sum_
{k=0}^{p-1} \binom{p-1}{k}\psi^{(k)}(t)\psi^{(p-1-k)}(t)
\right)\label{eq:Ttp}.
\end{equation}
Then, for $p\ge2$,
\begin{equation}\label{form:theorem1}
\sum_{k=1}^\infty \frac{H_k}{(k + t)^p}=T(t,p).
\end{equation}
\end{theorem}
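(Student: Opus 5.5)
The plan is to apply \cref{thm:lem1} directly to $R(s)=1/(s+t)^p$. Since $t$ is not an integer, $R$ has no poles at integers. Since $p\ge2$, we have $R(s)=O(s^{-2})$ at infinity. So the hypotheses hold, and the sum equals one half of the residue at the unique pole $\alpha=-t$, which has order $p$. Write $f(s)=\psi(-s)+\gamma$; it is analytic at $s=-t$ because $t\notin\mathbb Z$. Then we need
\[
\Res\!\left[-\frac{p\,f(s)}{(s+t)^{p+1}}-\frac{f(s)^2}{(s+t)^{p}}\right]_{s=-t}
=-\frac{p}{p!}f^{(p)}(-t)-\frac{1}{(p-1)!}\,(f^2)^{(p-1)}(-t),
\]
using the standard fact that the residue of $g(s)/(s+t)^{m+1}$ at $-t$ is $g^{(m)}(-t)/m!$ for $g$ analytic there. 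The first coefficient simplifies to $1/(p-1)!$, so the residue is $-\frac{1}{(p-1)!}\bigl[f^{(p)}(-t)+(f^2)^{(p-1)}(-t)\bigr]$.

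Next I would express the derivatives of $f$ through polygamma values at $t$. By the chain rule, $f^{(j)}(-t)=(-1)^j\psi^{(j)}(t)+\gamma\,[j=0]$. Leibniz's rule gives $(f^2)^{(p-1)}=\sum_{k=0}^{p-1}\binom{p-1}{k}f^{(k)}f^{(p-1-k)}$. Each product picks up the sign $(-1)^{k}(-1)^{p-1-k}=(-1)^{p-1}$.

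The constant $\gamma$ enters in three ways:
\begin{itemize}
\item through the $k=0$ term, contributing $\gamma\psi^{(p-1)}(t)$ with sign $(-1)^{p-1}$;
\item through the $k=p-1$ term, contributing the same amount, so together $2\gamma\psi^{(p-1)}(t)$;
\item through a $\gamma^2$ term, which appears only when $k=0=p-1$.
\end{itemize}
The last case is excluded since $p\ge2$. Hence
\[
(f^2)^{(p-1)}(-t)=(-1)^{p-1}\Bigl[\sum_{k=0}^{p-1}\binom{p-1}{k}\psi^{(k)}(t)\psi^{(p-1-k)}(t)+2\gamma\psi^{(p-1)}(t)\Bigr].
\]

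Substituting this, together with $f^{(p)}(-t)=(-1)^p\psi^{(p)}(t)$, and pulling out $(-1)^{p-1}$ gives a residue of
\[
\frac{(-1)^{p-1}}{(p-1)!}\Bigl[\psi^{(p)}(t)-2\gamma\psi^{(p-1)}(t)-\sum_{k}\binom{p-1}{k}\psi^{(k)}\psi^{(p-1-k)}\Bigr].
\]
Multiplying by $\tfrac12$ yields $T(t,p)$ as in \eqref{eq:Ttp}.

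The only delicate points are the sign bookkeeping from the chain rule and checking that the $\gamma^2$ term vanishes precisely because $p\ge2$. For $p=1$ this term is what was cancelled instead by the condition $\sum C_j=0$ in \cref{thm:thm1}.
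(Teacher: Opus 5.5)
Your proof is correct and is essentially the paper's argument: apply \cref{thm:lem1} to $R(s)=1/(s+t)^p$ and compute the residue at the single pole $s=-t$. The paper squares the Taylor series of $\psi(-s)+\gamma$ and reads off the coefficient of $(s+t)^{p-1}$, while you use the derivative formula for residues with Leibniz's rule, which is the same computation; your explicit check that the $\gamma^2$ term arises only when $p=1$ is a detail the paper leaves implicit.
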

\noindent Note that the value of $T(t,p)$ in \cref{eq:Ttp} is
consistent
with the value of~$T (t,1)$ in \cref{thm:thm1}.

\begin{proof}
We apply \cref{thm:lem1} to $R(s) = 1 / (s + t)^p$, whose only pole
is at~$s=-t$. 

From $R'(s)=-p/(s+t)^{p+1}$ and the Taylor expansion at $s=-t$
\begin{align}
\psi(-s)+\gamma &= \psi(t) + \gamma + \sum_{k\ge1}{(-1)^k
\frac{\psi^{(k)}(t)}{k!}(s+t)^k} \label{form:res1}
\end{align}
it follows that
\[\operatorname{Res}[R'(s)(\psi(-s)+\gamma)]_{s=-t}=(-1)^
{p-1}\frac{\psi^{(p)}(t)}{(p-1)!}.\]
Next, squaring the expansion above and extracting the coefficient
of $(s+t)^{p-1}$ gives
\[\operatorname{Res}[R(s)(\psi(-s)+\gamma)^2]_{s=-t}=\frac{(-1)^{p-1}}{
(p-1)!}\left(
2\gamma\psi^{(p-1)}(t)+\sum_
{k=0}^{p-1} \binom{p-1}{k}\psi^{(k)}(t)\psi^{(p-1-k)}(t)\right)
.\]
Summing both contributions gives the result.
\end{proof}

Note that the summation in the last line of \cref{form:BSlemma} is symmetric. Thus it suffices to sum it only halfway, doubling each summand except possibly the middle one.

One direct consequence of Theorem \ref{thm:thm2} is the following:
\begin{corollary} \label{cor:cor1}
For integers $m, n \geq 1, \gcd (m, n) = 1, p \geq 2$,
\begin{equation}\label{form:theorem3}
\sum_{k=1}^\infty \frac{H_k}{(mk + n)^p}=
\frac{(-1)^{p-1}}{2m^p(p-1)!}\left(\psi^{(p)}\left(\frac nm\right)
-2\gamma\psi^{(p-1)}\left(\frac nm\right)-\sum_
{k=0}^{p-1} \binom{p-1}{k}\psi^{(k)}\left(\frac nm\right)\psi^{(p-1-k)}\left(\frac nm\right)
\right).
\end{equation}
\end{corollary}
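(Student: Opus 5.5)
The plan is to reduce the corollary to \cref{thm:thm2} by pulling the multiplier $m$ out of the denominator. For every $k\ge1$ we have $mk+n=m\,(k+n/m)$, hence
\[
\frac{H_k}{(mk+n)^p}=\frac{1}{m^p}\,\frac{H_k}{(k+n/m)^p}.
\]
Summing over $k$ gives $\sum_{k\ge1}H_k/(mk+n)^p=m^{-p}\sum_{k\ge1}H_k/(k+t)^p$ with $t=n/m$. Since $p\ge2$ and $H_k=O(\log k)$, the series converges absolutely, so factoring out the constant $m^{-p}$ is legitimate.

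Next we check the hypotheses of \cref{thm:thm2}: $t$ must not be an integer, and $p\ge2$. The latter is assumed. For the former we use $\gcd(m,n)=1$: the quantity $n/m$ is an integer only when $m=1$. So for $m\ge2$, $t=n/m$ is a nonintegral positive rational. Applying \cref{form:theorem1} then gives $T(n/m,p)$, and multiplying the formula \cref{eq:Ttp} by $m^{-p}$ yields exactly \cref{form:theorem3}.

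The main obstacle is the case $m=1$, where $t=n$ is a positive integer and \cref{thm:thm2} as stated does not apply. I would handle it by returning to the proof of \cref{thm:thm2}, which relies on \cref{thm:lem1}. The only pole of $R(s)=1/(s+n)^p$ is at $s=-n$, a negative integer, and \cref{thm:lem1} excludes that case. However, $\psi(-s)$ is analytic at $s=-n$, because $\psi$ is analytic at $n>0$, so the Taylor expansion \cref{form:res1} used in the proof of \cref{thm:thm2} remains valid verbatim. What has to be checked is that the underlying Flajolet--Salvy identities still hold when $r$ has poles at negative integers. They do: those formulas come from a contour integral of $r(s)\xi(s)$, whose poles at the positive integers produce the sums, and the poles of $r$ at negative integers are ordinary residues of an analytic kernel.

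Alternatively, the case $m=1$ can be settled by continuity. Both sides of \cref{form:theorem1} are analytic in $t$ on $\operatorname{Re} t>0$. The left side is analytic there by uniform convergence, since $|k+t|\ge k$. The right side is analytic because $\psi^{(j)}$ is analytic off the nonpositive integers. They agree for all nonintegral $t>0$, so they agree at $t=n$ as well. I would present this continuity argument, since it is cleaner, and the corollary follows for all admissible $m,n,p$.
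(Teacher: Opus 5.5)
Your proof is correct and is the paper's argument: set $t=n/m$ in Theorem~\ref{thm:thm2} and factor out $m^{-p}$. You are in fact more careful than the paper. Its one-line proof overlooks that for $m=1$ the value $t=n$ is an integer, which the hypothesis of Theorem~\ref{thm:thm2} excludes. Your continuity argument in $t$ closes that case correctly, as does your alternative observation that Proposition~\ref{thm:lem1} only needs $R$ to have no poles at the positive integers, where $\psi(-s)+\gamma$ has its poles.
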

\begin{proof}
This follows from \cref{thm:thm2} after setting $t = n/m$.
\end{proof}
Numerous examples of Theorem \ref{thm:thm2} and Corollary \ref{cor:cor1} are presented in Appendix \ref{app:catalog}.

\subsection{Mixed denominators}
Combining the residues at each pole finally gives the following general result.
\begin{theorem} \label{thm:thm3}
Let $C_{i,j}$ be the coefficients of the partial fraction decomposition
\begin{equation}
R(k) = \sum_{j=1}^r\sum_{i=1}^{p_j}\frac{C_{i,j}}{(k+t_j)^i},
\end{equation}
with distinct~$t_j$, $C_{p_j,j}\neq0$ and integers $p_j \geq
1$ whose sum is at least~2. Then
\begin{align}
\sum_{k=1}^\infty R(k){H_k}=\sum_{j=1}^r\sum_{i=1}^{p_j}C_{i,j}T(t_j,i),
\label{form:theorem4a}
\end{align}
with $T$ the function from \cref{eq:Ttp}.
\end{theorem}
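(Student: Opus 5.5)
The plan is to apply \cref{thm:lem1} directly to $R$ and use the linearity of the residue computation. The hypotheses of \cref{thm:thm3} implicitly require that the sum converges and that no $t_j$ is an integer (or at least no pole at a positive integer). Since $R$ has no polynomial part, convergence requires $R(s)=O(s^{-2})$ at infinity. This is equivalent to $\sum_j C_{1,j}=0$, because the coefficient of $1/s$ in the expansion of $R$ at infinity is exactly $\sum_j C_{1,j}$. Hence \cref{thm:lem1} applies. The right-hand side of \cref{form:BSlemma} is a sum over the poles $\alpha=-t_j$, and the operator $R\mapsto \frac12\Res[R'(s)(\psi(-s)+\gamma)-R(s)(\psi(-s)+\gamma)^2]_{s=-t_j}$ is linear in $R$. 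Moreover, the summands $C_{i,j'}/(s+t_{j'})^i$ with $j'\neq j$ are analytic at $-t_j$ and contribute no residue there. So the total equals
\[\sum_{j=1}^r\sum_{i=1}^{p_j} C_{i,j}\,\frac12\Res\!\left[\frac{d}{ds}(s+t_j)^{-i}(\psi(-s)+\gamma)-(s+t_j)^{-i}(\psi(-s)+\gamma)^2\right]_{s=-t_j}.\]

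Next I evaluate each elementary residue. For $i\ge2$ this computation is exactly the one done in the proof of \cref{thm:thm2}, which does not use $p\ge2$ except to justify convergence. It gives $T(t_j,i)$ as defined in \cref{eq:Ttp}. For $i=1$, the simple-pole computation preceding \cref{thm:thm1} gives $\frac12(\psi'(t_j)-(\psi(t_j)+\gamma)^2)$, which equals $T(t_j,1)-\gamma^2/2$.

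The only remaining point, and the one requiring care, is the discrepancy $-\gamma^2/2$ in the $i=1$ terms. Summing these discrepancies gives $-\frac{\gamma^2}{2}\sum_j C_{1,j}$, which vanishes by the convergence condition $\sum_j C_{1,j}=0$ established above. This is the same mechanism as in the proof of \cref{thm:thm1}, and the remaining contributions reassemble into $\sum_{j}\sum_i C_{i,j}T(t_j,i)$, which is \cref{form:theorem4a}.

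I also need to check the residue at $0$ in case some $t_j=0$. The proof of \cref{thm:lem1} already shows that the combined formula needs no separate treatment of $0$. When $t_j=0$, however, $T(0,i)$ is undefined, so I will restrict to nonintegral $t_j$, consistent with the definition of $T$.
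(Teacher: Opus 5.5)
Your proof is correct and is essentially the argument the paper compresses into ``combining the residues at each pole''. You apply \cref{thm:lem1} to $R$ as a whole, split each residue by linearity over the partial-fraction terms, and reuse the local computations from the proof of \cref{thm:thm2} for $i\ge2$ and from the simple-pole case for $i=1$; the leftover $-\frac{\gamma^2}{2}\sum_j C_{1,j}$ then cancels via the convergence condition exactly as in the proof of \cref{thm:thm1}, and your explicit observations that convergence (equivalently $\sum_j C_{1,j}=0$) and nonintegrality of the $t_j$ are implicit hypotheses usefully fill in details the paper leaves unstated.
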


\noindent
As a typical illustration, here is an extension of 
\cref{thm:thm3}.
\begin{corollary} \label{cor:cor2}
For integers $m, n \geq 1, \gcd (m, n) = 1, q \geq 1$ and $p \geq q+2$, and for the $T$ notation as defined in \cref{eq:Ttp},
\[\sum_{k=1}^\infty \frac{k^q H_k}{(mk+n)^p} =\frac1{m^p} \sum_
{j=0}^q 
(-1)^j
\binom{q}{j} \left(\frac nm\right)^j T \left(\frac nm,p-q+j\right).\]
\end{corollary}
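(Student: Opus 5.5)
Set $t=n/m$, so that $mk+n=m(k+t)$ and
\[\frac{k^q}{(mk+n)^p}=\frac1{m^p}\,\frac{k^q}{(k+t)^p}.\]
The plan is to compute the partial fraction decomposition of $k^q/(k+t)^p$ in powers of $1/(k+t)$ and then read off the answer from \cref{thm:thm3}.

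Write $k=(k+t)-t$ and expand with the binomial theorem:
\[k^q=\sum_{i=0}^q\binom{q}{i}(k+t)^{i}(-t)^{q-i}.\]
Dividing by $(k+t)^p$ gives
\[\frac{k^q}{(k+t)^p}=\sum_{i=0}^q\binom qi(-t)^{q-i}\frac{1}{(k+t)^{p-i}}.\]
Reindex with $j=q-i$. Then $\binom qi=\binom qj$, the coefficient becomes $(-1)^j t^j$, and the exponent becomes $p-q+j$. This yields
\[\frac{k^q}{(k+t)^p}=\sum_{j=0}^q(-1)^j\binom qj t^j\,\frac1{(k+t)^{p-q+j}}.\]
Since $p\ge q+2$, every exponent $p-q+j$ is at least $2$ and at most $p$. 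Distinct values of $j$ give distinct exponents, so this is exactly the (unique) partial fraction decomposition, with a single pole $t_1=-t$ of order $p$.

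Next, check the hypotheses of \cref{thm:thm3}, or equivalently of \cref{thm:lem1}.
\begin{itemize}
\item The fraction is $O(k^{-2})$ at infinity because $p-q\ge2$.
\item Its only pole is $-n/m$. Since $\gcd(m,n)=1$ and $m,n\ge1$, this pole is an integer only when $m=1$, in which case it is the negative integer $-n$.
\item The top coefficient $C_{p,1}=(-1)^q t^q$ is nonzero, since $t\neq 0$.
\end{itemize}
Applying \cref{form:theorem4a} with $C_{p-q+j,1}=(-1)^j\binom qj t^j$ and multiplying by $1/m^p$ gives exactly the stated formula.

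The main obstacle is the integrality condition. \cref{thm:thm2} and \cref{thm:thm3} assume $t$ is not an integer, and this fails when $m=1$ (then $t=n$). For that case I would note the following. The residue computation only needs the Taylor expansion of $\psi(-s)$ at $s=-t$, and this expansion is valid whenever $-t$ is not a nonnegative integer, because $\psi(-s)$ is analytic at negative $s$. Hence \cref{thm:lem1} applies with no poles of $R$ at nonnegative integers, and the same formula holds.

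A second route for $m=1$ avoids this justification: it is already covered by \cref{cor:cor1} termwise, whose statement allows $m=1$. Alternatively, one could argue by continuity in $t$ from nearby nonintegers, since both sides are analytic in $t>0$.
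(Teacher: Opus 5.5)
Your proof is correct and is essentially the paper's: the paper obtains the same decomposition by writing $\frac{k^q}{(k+t)^p}=\frac{1}{(k+t)^{p-q}}\bigl(1-\frac{t}{k+t}\bigr)^q$, expanding, and applying \cref{thm:thm3}, which is your expansion of $k=(k+t)-t$ in different form. Your discussion of $m=1$ (integral $t=n$, which the definition of $T$ formally excludes) covers a point the paper passes over silently; of your three fixes, the continuity-in-$t$ argument is the self-contained one, because \cref{cor:cor1} is itself proved by invoking \cref{thm:thm2} at possibly integral $t$.
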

\begin{proof} By \cref{thm:thm3}, it is sufficient to compute the
coefficients of the partial fraction decomposition of $k^q/(mk+n)^p$
with respect to~$k$. The result then follows from
\[\frac{k^q}{(mk+n)^p}=\frac{m^{-p}k^q}{(k+n/m)^p}=\frac{m^{-p}}{(k+n/m)^
{p-q}}\left (1- \frac{n/m}
{k+n/m}\right)^q=m^{-p}\sum_{j=0}^q (-1)^j \binom{q}{j} \frac{(n/m)^j}
{
(k+n/m)^
{p-q+j}}.\qedhere\]
\end{proof}
\begin{example}
Example \ref{form:illcor3} can be computed by hand from
the partial fraction decomposition 
\[\frac{1}{(2k+1)^3(3k+1)^2}=
\frac4{(2k+1)^3}
+\frac{24}{(2k+1)^2}
+\frac{108}{2k+1}
+\frac{27}{(3k+1)^2}
-\frac{162}{3k+1},
\]
which is easy to compute and even easier to check. 
\end{example}
Numerous other examples of Theorem \ref{thm:thm3} are presented in Appendix \ref{app:catalog}.

\section{Numerical computation of mixed-denominator Euler sums} 
\label{sec:direct}

The research leading to the results in the previous section relied on direct high-precision numerical evaluations of specific Euler sums, combined with an integer relation computation to produce the coefficients of the right-hand side terms. Also, even with the above results in hand, we have found that it is often easier to use numerical methods to obtain the formulas. In fact, the catalog of formulas in Appendix \ref{app:catalog} was obtained by this process, after checking in each case that the numerical value matches the result given by the above theorems to high precision.

The numerical scheme we employed here is a variation of a scheme described in \cite{McPBai2025}. In very brief summary, the overall strategy is to compute a large number ($10^8$) terms of the Euler sum explicitly to high precision, then employ the Euler-Maclaurin summation formula \cite[pg.~285]{Atkinson1989} twice to sum the tail of the series. The specific algorithm we employed resulted in an approximation correct to within roughly $10^{-290}$.  In our implementation, computed results were typically accurate to roughly 280 digits. 

Once a high-precision numerical value was obtained, we employed the multipair PSLQ integer relation algorithm \cite{Bailey2000}, applied to the numerical values of the sum and the hypothesized right-hand side constants, to obtain the rational coefficients in the catalogued formulas below.

\section{Computing digamma and polygamma} \label{sec:polygamma}

The formulas given above involve Euler's constant, the digamma function and the polygamma function. \emph{Maple} and \emph{Mathematica} can evaluate these to arbitrary precision, but not all researchers have access to this commercial software, and others may wish to explore these relations with custom code.

In our computations, we employed formulas and algorithms presented in \cite{mpfun2020}, to which the reader is referred for full details. In very brief summary, we employed this scheme for digamma: For a modest value of $x$, first apply the recursion $\psi(x+1) = \psi(x) + 1/x$, repeating $0.45B$ times, where $B$ is the working precision in bits, to shift the argument to a larger value. Then employ this formula \cite[5.11.2]{dlmf}:
\begin{align}
\psi(x) &\approx \log(x) - \frac{1}{2x} - \sum_{k=1}^n \frac{B_{2k}}{2k x^{2k}}, \label{form:digamma}
\end{align}
where $n = 2 D$, $D$ is the precision in digits, and $\{B_{2k}\}$ is a set of precomputed even Bernoulli numbers.

A similar scheme for polygamma is the following: For $\psi^{(q)}(z)$ with integer $q \geq 1$, first reduce $z$ to the range $(0,1]$ by applying the recurrence $\psi^{(q)}(z) = \psi^{(q)}(z+1) - (-1)^q q! z^{-q-1}$. Then polygamma can be evaluated from the Hurwitz zeta function via the formula \cite{wiki-polygamma} $\psi^{(q)}(z) = (-1)^{q+1} q! \zeta(q+1, z)$. To compute the Hurwitz zeta function, select an integer $q > 0.6D$, where $D$ is the precision in digits. Let $n = 2 D$ as above. Then
\begin{align}
\zeta(s,a) &\approx \sum_{k=0}^{q-1} \frac{1}{(a+k)^s} + \frac{1}{(s-1)(a+q)^{s-1}} + \frac{1}{2(a+q)^s} + \sum_{k=1}^n \frac{B_{2k} \, s (s+1) \cdots (s+2k-2)}{(2k)! (a+q)^{s+2k-1}}.
\end{align}
The above algorithms require a precomputed set of even Bernoulli numbers $B_{2k}$ of size $n$. These can be computed efficiently by a scheme presented in \cite{mpfun2020}.

\section{Conclusion}

In this paper, we have presented explicit formulas and techniques for analytically evaluating Euler sums of a general class, namely sums of the form $\sum_{k=1}^\infty R(k) H_k$, where $R(k)$ is any rational function whose denominator degree is at least two greater than the numerator degree. We have also briefly presented computational techniques to compute these sums numerically to very high precision, which enabled us to confirm the analytic formulas in hundreds of specific cases.

One fair question here is whether the analytic formulas given by the above theorems truly add value: Don't they just replace one summation (the left-hand side) with several more (one for each digamma and polygamma function evaluation)?  The key fact here is that digamma and polygamma function evaluations, if performed using the fast algorithms mentioned in Section \ref{sec:polygamma}, are typically many times faster than evaluating Euler sums directly using the method given in Section \ref{sec:direct}. For example, we found that numerically evaluating the Euler sum $\sum_{k=1}^\infty H_k/(3k+2)^4 = 0.002220075526\ldots$ to 280-digit precision using the fast digamma and polygamma formulas was approximately 30,000 times faster than using the direct scheme.

So there is value in these formulas beyond mere mathematical elegance. Also, the results presented here may be of use in accelerating sums with a leading term of logarithmic form in the summand numerator.

In any event, it is clear there is still much that can be done. For example, using other residue representations, one can obtain an algorithm along the lines of Algorithm~\ref{algo:algo1} for many sums of the types
\begin{align}
\sum_{k=1}^\infty \frac{H_k^2}{(m k + n)^p}, \quad \sum_{k=1}^\infty \frac{(-1)^k H_k}{(m k + n)^p}, \quad \sum_{k=1}^\infty \frac{H_{2k}}{(m k + n)^p},
\end{align}
for positive integers $m \geq 3, \gcd(m,n) = 1$ and $p \geq 2$.

\appendix

\section{Maple code for Algorithm 
\ref{algo:algo1}}
\label{app:maple_code}

The Maple code below has been successfully tested on the examples of this article. We have also translated  this code into Mathematica, using one of the currently available large language models (LLMs), and found that the resulting code passed a similar set of tests. But in general one should be cautious with such translations, since the handling of algebraic numbers can be very different in different computer algebra systems, and, based on our experience, not all LLMs do this translation correctly.

\newpage

\begin{verbatim}
# Input: 
#   . R a rational function
#   . k its variable
# Output: 
#   $\sum_{k=1}^\infty{R(k)H_k}$,
# where $H_k$ is the $k$th harnomic number $1+1/2+\dots+1/k$.
harmonicsum:=proc(R,k)
local r,numr,denr,ker,facts,factmult,fact,pole,rest,restfact,S,res,val,mult,expansionr,alpha;
    r:=normal(R);   # reduce to the same denominator
    numr:=numer(r); denr:=denom(r);
    if degree(denr,k)-degree(numr,k)<-2 then error "sum not convergent" fi;
    ker:=Psi(-k)+gamma;
    facts:=factors(denr)[2];
    for factmult in facts do # deal with factors of the denominator one by one
        fact,mult:=op(factmult); # factor and multiplicity
        pole:=RootOf(fact,k);
        if type(pole,posint) then error "infinite summand" fi;
        rest:=normal(r*fact^mult);
        # expansion of $r$ at the pole
        restfact:=subs(alpha=pole,series((fact-eval(fact,k=alpha))/(k-alpha),k=alpha,mult+1));
        expansionr:=series(rest/restfact^mult/(k-pole)^mult,k=pole,mult+1);
        # simplify coefficients for nonrational poles
        if has(pole,RootOf) then expansionr:=map(evala,expansionr) fi;
        # expansion of $r'(k)(\psi(-k)+\gamma)-r(k)(\psi(-k)+\gamma)^2$ at this pole
        S:=series(diff(expansionr,k)*ker-expansionr*ker^2,k=pole,mult+1);
        # extract residue
        res:=coeff(S,k-pole,-1)/2;
        # sum over all conjugate roots of this factor
        if has(res,RootOf) then val[factmult]:=convert([allvalues(res)],`+`)
        else val[factmult]:=res
        fi
    od;
    collect(add(val[factmult],factmult=facts),Psi,'distributed')
end:
\end{verbatim}

\section{Catalog of formulas} \label{app:catalog}

We present here a selection of formulas that we have produced according to the above techniques. In each case, we computed the Euler sum numerically in two ways: (a) by applying the theorems of Section \ref{sec:theorems}, together with the numerical algorithms given in Section \ref{sec:polygamma}, and (b) by the direct scheme described in Section \ref{sec:direct}. After verifying that the two numerical values were in agreement, the multipair PSLQ algorithm, applied to the computed value and the hypothesized right-hand side constants, then found the rational coefficients and generated the formulas below. 

\emph{To guard against the possibility of transcription errors, the LaTeX code for these formulas was produced by a program directly from the computer output, and is included here without any alteration.}

\subsection{Theorem \ref{thm:thm2} (Corollary \ref{cor:cor1}) formulas} \label{sec:catcor1}

\begin{align}
\sum_{k=1}^\infty \frac{H_k}{(2k+1)^2} &= \frac{1}{8}\left( -1\psi(2,1/2) +2\gamma\psi(1,1/2) +2\psi(0,1/2)\psi(1,1/2)\right) \\
\sum_{k=1}^\infty \frac{H_k}{(2k+1)^3} &= \frac{ 1}{16}\left( -1\gamma\psi(2,1/2) -1\psi(0,1/2)\psi(2,1/2) +1\psi(1,1/2)\psi(1,1/2)\right) \\
\sum_{k=1}^\infty \frac{H_k}{(2k+1)^4} &= \frac{1}{192}\left( -1\psi(4,1/2) +2\gamma\psi(3,1/2) +2\psi(0,1/2)\psi(3,1/2) +6\psi(1,1/2)\psi(2,1/2)\right) \\
\sum_{k=1}^\infty \frac{H_k}{(2k+1)^6} &= \frac{ 1}{15360}\left( -1\psi(6,1/2) +2\gamma\psi(5,1/2) +2\psi(0,1/2)\psi(5,1/2) +10\psi(1,1/2)\psi(4,1/2)  \right. \nonumber \\ &\left. \hspace{1em}
+20\psi(2,1/2)\psi(3,1/2)\right) \\
\sum_{k=1}^\infty \frac{H_k}{(2k+1)^7} &= \frac{1}{368640}\left(  1\psi(7,1/2) -4\gamma\psi(6,1/2) -4\psi(0,1/2)\psi(6,1/2) -60\psi(2,1/2)\psi(4,1/2)\right) \\
\sum_{k=1}^\infty \frac{H_k}{(2k+1)^8} &= \frac{1}{2580480}\left( -1\psi(8,1/2) +2\gamma\psi(7,1/2) +2\psi(0,1/2)\psi(7,1/2) +14\psi(1,1/2)\psi(6,1/2)  \right. \nonumber \\ &\left. \hspace{1em}
+42\psi(2,1/2)\psi(5,1/2) +70\psi(3,1/2)\psi(4,1/2)\right) \\
\sum_{k=1}^\infty \frac{H_k}{(3k+1)^2} &= \frac{1}{18}\left( -1\psi(2,1/3) +2\gamma\psi(1,1/3) +2\psi(0,1/3)\psi(1,1/3)\right) \\
\sum_{k=1}^\infty \frac{H_k}{(3k+1)^3} &= \frac{ 1}{108}\left(  1\psi(3,1/3) -2\gamma\psi(2,1/3) -2\psi(0,1/3)\psi(2,1/3) -2\psi(1,1/3)\psi(1,1/3)\right) \\
\sum_{k=1}^\infty \frac{H_k}{(3k+1)^4} &= \frac{ 1}{972}\left( -1\psi(4,1/3) +2\gamma\psi(3,1/3) +2\psi(0,1/3)\psi(3,1/3) +6\psi(1,1/3)\psi(2,1/3)\right) \\
\sum_{k=1}^\infty \frac{H_k}{(3k+1)^5} &= \frac{1}{11664}\left(  1\psi(5,1/3) -2\gamma\psi(4,1/3) -2\psi(0,1/3)\psi(4,1/3) -8\psi(1,1/3)\psi(3,1/3)  \right. \nonumber \\ &\left. \hspace{1em}
-6\psi(2,1/3)\psi(2,1/3)\right) \\
\sum_{k=1}^\infty \frac{H_k}{(3k+1)^6} &= \frac{ 1}{174960}\left( -1\psi(6,1/3) +2\gamma\psi(5,1/3) +2\psi(0,1/3)\psi(5,1/3) +10\psi(1,1/3)\psi(4,1/3)  \right. \nonumber \\ &\left. \hspace{1em}
+20\psi(2,1/3)\psi(3,1/3)\right) \\
\sum_{k=1}^\infty \frac{H_k}{(3k+1)^7} &= \frac{1}{3149280}\left(  1\psi(7,1/3) -2\gamma\psi(6,1/3) -2\psi(0,1/3)\psi(6,1/3) -12\psi(1,1/3)\psi(5,1/3)  \right. \nonumber \\ &\left. \hspace{1em}
-30\psi(2,1/3)\psi(4,1/3) -20\psi(3,1/3)\psi(3,1/3)\right) \\
\sum_{k=1}^\infty \frac{H_k}{(3k+1)^8} &= \frac{ 1}{66134880}\left( -1\psi(8,1/3) +2\gamma\psi(7,1/3) +2\psi(0,1/3)\psi(7,1/3) +14\psi(1,1/3)\psi(6,1/3)  \right. \nonumber \\ &\left. \hspace{1em}
+42\psi(2,1/3)\psi(5,1/3) +70\psi(3,1/3)\psi(4,1/3)\right)
\end{align}
 
\begin{align}
\sum_{k=1}^\infty \frac{H_k}{(3k+2)^2} &= \frac{ 1}{18}\left( -1\psi(2,2/3) +2\gamma\psi(1,2/3) +2\psi(0,2/3)\psi(1,2/3)\right) \\
\sum_{k=1}^\infty \frac{H_k}{(3k+2)^3} &= \frac{ 1}{108}\left(  1\psi(3,2/3) -2\gamma\psi(2,2/3) -2\psi(0,2/3)\psi(2,2/3) -2\psi(1,2/3)\psi(1,2/3)\right) \\
\sum_{k=1}^\infty \frac{H_k}{(3k+2)^4} &= \frac{ 1}{972}\left( -1\psi(4,2/3) +2\gamma\psi(3,2/3) +2\psi(0,2/3)\psi(3,2/3) +6\psi(1,2/3)\psi(2,2/3)\right) \\
\sum_{k=1}^\infty \frac{H_k}{(3k+2)^5} &= \frac{1}{11664}\left(  1\psi(5,2/3) -2\gamma\psi(4,2/3) -2\psi(0,2/3)\psi(4,2/3) -8\psi(1,2/3)\psi(3,2/3)  \right. \nonumber \\ &\left. \hspace{1em}
-6\psi(2,2/3)\psi(2,2/3)\right) \\
\sum_{k=1}^\infty \frac{H_k}{(3k+2)^6} &= \frac{ 1}{174960}\left( -1\psi(6,2/3) +2\gamma\psi(5,2/3) +2\psi(0,2/3)\psi(5,2/3) +10\psi(1,2/3)\psi(4,2/3)  \right. \nonumber \\ &\left. \hspace{1em}
+20\psi(2,2/3)\psi(3,2/3)\right) \\
\sum_{k=1}^\infty \frac{H_k}{(3k+2)^7} &= \frac{ 1}{3149280}\left(  1\psi(7,2/3) -2\gamma\psi(6,2/3) -2\psi(0,2/3)\psi(6,2/3) -12\psi(1,2/3)\psi(5,2/3)  \right. \nonumber \\ &\left. \hspace{1em}
-30\psi(2,2/3)\psi(4,2/3) -20\psi(3,2/3)\psi(3,2/3)\right) \\
\sum_{k=1}^\infty \frac{H_k}{(3k+2)^8} &= \frac{1}{66134880}\left( -1\psi(8,2/3) +2\gamma\psi(7,2/3) +2\psi(0,2/3)\psi(7,2/3) +14\psi(1,2/3)\psi(6,2/3)  \right. \nonumber \\ &\left. \hspace{1em}
+42\psi(2,2/3)\psi(5,2/3) +70\psi(3,2/3)\psi(4,2/3)\right) \\
\sum_{k=1}^\infty \frac{H_k}{(4k+1)^2} &= \frac{ 1}{32}\left( -1\psi(2,1/4) +2\gamma\psi(1,1/4) +2\psi(0,1/4)\psi(1,1/4)\right) \\
\sum_{k=1}^\infty \frac{H_k}{(4k+1)^3} &= \frac{ 1}{256}\left(  1\psi(3,1/4) -2\gamma\psi(2,1/4) -2\psi(0,1/4)\psi(2,1/4) -2\psi(1,1/4)\psi(1,1/4)\right) \\
\sum_{k=1}^\infty \frac{H_k}{(4k+1)^4} &= \frac{ 1}{3072}\left( -1\psi(4,1/4) +2\gamma\psi(3,1/4) +2\psi(0,1/4)\psi(3,1/4) +6\psi(1,1/4)\psi(2,1/4)\right) \\
\sum_{k=1}^\infty \frac{H_k}{(4k+1)^5} &= \frac{ 1}{49152}\left(  1\psi(5,1/4) -2\gamma\psi(4,1/4) -2\psi(0,1/4)\psi(4,1/4) -8\psi(1,1/4)\psi(3,1/4)  \right. \nonumber \\ &\left. \hspace{1em}
-6\psi(2,1/4)\psi(2,1/4)\right) \\
\sum_{k=1}^\infty \frac{H_k}{(4k+1)^6} &= \frac{1}{983040}\left( -1\psi(6,1/4) +2\gamma\psi(5,1/4) +2\psi(0,1/4)\psi(5,1/4) +10\psi(1,1/4)\psi(4,1/4)  \right. \nonumber \\ &\left. \hspace{1em}
+20\psi(2,1/4)\psi(3,1/4)\right) \\
\sum_{k=1}^\infty \frac{H_k}{(4k+1)^7} &= \frac{ 1}{23592960}\left(  1\psi(7,1/4) -2\gamma\psi(6,1/4) -2\psi(0,1/4)\psi(6,1/4) -12\psi(1,1/4)\psi(5,1/4)  \right. \nonumber \\ &\left. \hspace{1em}
-30\psi(2,1/4)\psi(4,1/4) -20\psi(3,1/4)\psi(3,1/4)\right)
\end{align}
 
\begin{align}
\sum_{k=1}^\infty \frac{H_k}{(4k+1)^8} &= \frac{ 1}{660602880}\left( -1\psi(8,1/4) +2\gamma\psi(7,1/4) +2\psi(0,1/4)\psi(7,1/4) +14\psi(1,1/4)\psi(6,1/4)  \right. \nonumber \\ &\left. \hspace{1em}
+42\psi(2,1/4)\psi(5,1/4) +70\psi(3,1/4)\psi(4,1/4)\right) \\
\sum_{k=1}^\infty \frac{H_k}{(4k+3)^2} &= \frac{1}{32}\left( -1\psi(2,3/4) +2\gamma\psi(1,3/4) +2\psi(0,3/4)\psi(1,3/4)\right) \\
\sum_{k=1}^\infty \frac{H_k}{(4k+3)^3} &= \frac{1}{256}\left(  1\psi(3,3/4) -2\gamma\psi(2,3/4) -2\psi(0,3/4)\psi(2,3/4) -2\psi(1,3/4)\psi(1,3/4)\right) \\
\sum_{k=1}^\infty \frac{H_k}{(4k+3)^4} &= \frac{ 1}{3072}\left( -1\psi(4,3/4) +2\gamma\psi(3,3/4) +2\psi(0,3/4)\psi(3,3/4) +6\psi(1,3/4)\psi(2,3/4)\right) \\
\sum_{k=1}^\infty \frac{H_k}{(4k+3)^5} &= \frac{1}{49152}\left(  1\psi(5,3/4) -2\gamma\psi(4,3/4) -2\psi(0,3/4)\psi(4,3/4) -8\psi(1,3/4)\psi(3,3/4)  \right. \nonumber \\ &\left. \hspace{1em}
-6\psi(2,3/4)\psi(2,3/4)\right) \\
\sum_{k=1}^\infty \frac{H_k}{(4k+3)^6} &= \frac{ 1}{983040}\left( -1\psi(6,3/4) +2\gamma\psi(5,3/4) +2\psi(0,3/4)\psi(5,3/4) +10\psi(1,3/4)\psi(4,3/4)  \right. \nonumber \\ &\left. \hspace{1em}
+20\psi(2,3/4)\psi(3,3/4)\right) \\
\sum_{k=1}^\infty \frac{H_k}{(4k+3)^7} &= \frac{1}{23592960}\left(  1\psi(7,3/4) -2\gamma\psi(6,3/4) -2\psi(0,3/4)\psi(6,3/4) -12\psi(1,3/4)\psi(5,3/4)  \right. \nonumber \\ &\left. \hspace{1em}
-30\psi(2,3/4)\psi(4,3/4) -20\psi(3,3/4)\psi(3,3/4)\right) \\
\sum_{k=1}^\infty \frac{H_k}{(4k+3)^8} &= \frac{ 1}{660602880}\left( -1\psi(8,3/4) +2\gamma\psi(7,3/4) +2\psi(0,3/4)\psi(7,3/4) +14\psi(1,3/4)\psi(6,3/4)  \right. \nonumber \\ &\left. \hspace{1em}
+42\psi(2,3/4)\psi(5,3/4) +70\psi(3,3/4)\psi(4,3/4)\right)
\end{align}

\subsection{Theorem \ref{thm:thm3} formulas} \label{sec:catcor2}

\begin{align}
\sum_{k=1}^\infty \frac{H_k}{(2k+1)(3k+1)} &= \frac{1}{2}\left( -1\psi(1,1/2) +2\gamma\psi(0,1/2) +1\psi(0,1/2)\psi(0,1/2) +1\psi(1,1/3) -2\gamma\psi(0,1/3)  \right. \nonumber \\ &\left. \hspace{1em}
-1\psi(0,1/3)\psi(0,1/3)\right) \\
\sum_{k=1}^\infty \frac{H_k}{(2k+1)(3k+1)^2} &= \frac{-1}{6}\left( -6\psi(1,1/2) +12\gamma\psi(0,1/2) +6\psi(0,1/2)\psi(0,1/2) +6\psi(1,1/3) -12\gamma\psi(0,1/3)  \right. \nonumber \\ &\left. \hspace{1em}
-6\psi(0,1/3)\psi(0,1/3) +1\psi(2,1/3) -2\gamma\psi(1,1/3) -2\psi(0,1/3)\psi(1,1/3)\right) \\
\sum_{k=1}^\infty \frac{H_k}{(2k+1)(3k+1)^3} &= \frac{-1}{36}\left(  72\psi(1,1/2) -144\gamma\psi(0,1/2) -72\psi(0,1/2)\psi(0,1/2) -72\psi(1,1/3) +144\gamma\psi(0,1/3)  \right. \nonumber \\ &\left. \hspace{1em}
+72\psi(0,1/3)\psi(0,1/3) -12\psi(2,1/3) +24\gamma\psi(1,1/3) +24\psi(0,1/3)\psi(1,1/3)  \right. \nonumber \\ &\left. \hspace{1em}
-1\psi(3,1/3) +2\gamma\psi(2,1/3) +2\psi(0,1/3)\psi(2,1/3) +2\psi(1,1/3)\psi(1,1/3)\right) \\
\sum_{k=1}^\infty \frac{H_k}{(2k+1)(3k+2)} &= \frac{1}{2}\left(  1\psi(1,1/2) -2\gamma\psi(0,1/2) -1\psi(0,1/2)\psi(0,1/2) -1\psi(1,2/3) +2\gamma\psi(0,2/3)  \right. \nonumber \\ &\left. \hspace{1em}
+1\psi(0,2/3)\psi(0,2/3)\right) \\
\sum_{k=1}^\infty \frac{H_k}{(2k+1)(3k+2)^2} &= \frac{-1}{6}\left( -6\psi(1,1/2) +12\gamma\psi(0,1/2) +6\psi(0,1/2)\psi(0,1/2) +6\psi(1,2/3) -12\gamma\psi(0,2/3)  \right. \nonumber \\ &\left. \hspace{1em}
-6\psi(0,2/3)\psi(0,2/3) -1\psi(2,2/3) +2\gamma\psi(1,2/3) +2\psi(0,2/3)\psi(1,2/3)\right) \\
\sum_{k=1}^\infty \frac{H_k}{(2k+1)(3k+2)^3} &= \frac{-1}{36}\left( -72\psi(1,1/2) +144\gamma\psi(0,1/2) +72\psi(0,1/2)\psi(0,1/2) +72\psi(1,2/3) -144\gamma\psi(0,2/3)  \right. \nonumber \\ &\left. \hspace{1em}
-72\psi(0,2/3)\psi(0,2/3) -12\psi(2,2/3) +24\gamma\psi(1,2/3) +24\psi(0,2/3)\psi(1,2/3)  \right. \nonumber \\ &\left. \hspace{1em}
+1\psi(3,2/3) -2\gamma\psi(2,2/3) -2\psi(0,2/3)\psi(2,2/3) -2\psi(1,2/3)\psi(1,2/3)\right) \\
\sum_{k=1}^\infty \frac{H_k}{(2k+1)(4k+1)} &= \frac{-1}{4}\left(  1\psi(1,1/2) -2\gamma\psi(0,1/2) -1\psi(0,1/2)\psi(0,1/2) -1\psi(1,1/4) +2\gamma\psi(0,1/4)  \right. \nonumber \\ &\left. \hspace{1em}
+1\psi(0,1/4)\psi(0,1/4)\right) \\
\sum_{k=1}^\infty \frac{H_k}{(2k+1)(4k+1)^2} &= \frac{-1}{16}\left( -4\psi(1,1/2) +8\gamma\psi(0,1/2) +4\psi(0,1/2)\psi(0,1/2) +4\psi(1,1/4) -8\gamma\psi(0,1/4)  \right. \nonumber \\ &\left. \hspace{1em}
-4\psi(0,1/4)\psi(0,1/4) +1\psi(2,1/4) -2\gamma\psi(1,1/4) -2\psi(0,1/4)\psi(1,1/4)\right) \\
\sum_{k=1}^\infty \frac{H_k}{(2k+1)(4k+1)^3} &= \frac{-1}{128}\left(  32\psi(1,1/2) -64\gamma\psi(0,1/2) -32\psi(0,1/2)\psi(0,1/2) -32\psi(1,1/4) +64\gamma\psi(0,1/4)  \right. \nonumber \\ &\left. \hspace{1em}
+32\psi(0,1/4)\psi(0,1/4) -8\psi(2,1/4) +16\gamma\psi(1,1/4) +16\psi(0,1/4)\psi(1,1/4)  \right. \nonumber \\ &\left. \hspace{1em}
-1\psi(3,1/4) +2\gamma\psi(2,1/4) +2\psi(0,1/4)\psi(2,1/4) +2\psi(1,1/4)\psi(1,1/4)\right) \\
\sum_{k=1}^\infty \frac{H_k}{(2k+1)(4k+3)} &= \frac{-1}{4}\left( -1\psi(1,1/2) +2\gamma\psi(0,1/2) +1\psi(0,1/2)\psi(0,1/2) +1\psi(1,3/4) -2\gamma\psi(0,3/4)  \right. \nonumber \\ &\left. \hspace{1em}
-1\psi(0,3/4)\psi(0,3/4)\right)
\end{align}
 
\begin{align}
\sum_{k=1}^\infty \frac{H_k}{(2k+1)(4k+3)^2} &= \frac{1}{16}\left(  4\psi(1,1/2) -8\gamma\psi(0,1/2) -4\psi(0,1/2)\psi(0,1/2) -4\psi(1,3/4) +8\gamma\psi(0,3/4)  \right. \nonumber \\ &\left. \hspace{1em}
+4\psi(0,3/4)\psi(0,3/4) +1\psi(2,3/4) -2\gamma\psi(1,3/4) -2\psi(0,3/4)\psi(1,3/4)\right) \\
\sum_{k=1}^\infty \frac{H_k}{(2k+1)(4k+3)^3} &= \frac{1}{128}\left(  32\psi(1,1/2) -64\gamma\psi(0,1/2) -32\psi(0,1/2)\psi(0,1/2) -32\psi(1,3/4) +64\gamma\psi(0,3/4)  \right. \nonumber \\ &\left. \hspace{1em}
+32\psi(0,3/4)\psi(0,3/4) +8\psi(2,3/4) -16\gamma\psi(1,3/4) -16\psi(0,3/4)\psi(1,3/4)  \right. \nonumber \\ &\left. \hspace{1em}
-1\psi(3,3/4) +2\gamma\psi(2,3/4) +2\psi(0,3/4)\psi(2,3/4) +2\psi(1,3/4)\psi(1,3/4)\right) \\
\sum_{k=1}^\infty \frac{H_k}{(2k+1)^2(3k+1)} &= \frac{-1}{4}\left(  6\psi(1,1/2) -12\gamma\psi(0,1/2) -6\psi(0,1/2)\psi(0,1/2) -1\psi(2,1/2) +2\gamma\psi(1,1/2)  \right. \nonumber \\ &\left. \hspace{1em}
+2\psi(0,1/2)\psi(1,1/2) -6\psi(1,1/3) +12\gamma\psi(0,1/3) +6\psi(0,1/3)\psi(0,1/3)\right) \\
\sum_{k=1}^\infty \frac{H_k}{(2k+1)^2(3k+1)^2} &= \frac{-1}{2}\left( -12\psi(1,1/2) +24\gamma\psi(0,1/2) +12\psi(0,1/2)\psi(0,1/2) +1\psi(2,1/2) -2\gamma\psi(1,1/2)  \right. \nonumber \\ &\left. \hspace{1em}
-2\psi(0,1/2)\psi(1,1/2) +12\psi(1,1/3) -24\gamma\psi(0,1/3) -12\psi(0,1/3)\psi(0,1/3)  \right. \nonumber \\ &\left. \hspace{1em}
+1\psi(2,1/3) -2\gamma\psi(1,1/3) -2\psi(0,1/3)\psi(1,1/3)\right) \\
\sum_{k=1}^\infty \frac{H_k}{(2k+1)^2(3k+1)^3} &= \frac{1}{12}\left( -216\psi(1,1/2) +432\gamma\psi(0,1/2) +216\psi(0,1/2)\psi(0,1/2) +12\psi(2,1/2)  \right. \nonumber \\ &\left. \hspace{1em}
-24\gamma\psi(1,1/2) -24\psi(0,1/2)\psi(1,1/2) +216\psi(1,1/3) -432\gamma\psi(0,1/3)  \right. \nonumber \\ &\left. \hspace{1em}
-216\psi(0,1/3)\psi(0,1/3) +24\psi(2,1/3) -48\gamma\psi(1,1/3) -48\psi(0,1/3)\psi(1,1/3)  \right. \nonumber \\ &\left. \hspace{1em}
+1\psi(3,1/3) -2\gamma\psi(2,1/3) -2\psi(0,1/3)\psi(2,1/3) -2\psi(1,1/3)\psi(1,1/3)\right) \\
\sum_{k=1}^\infty \frac{H_k}{(2k+1)^2(3k+2)} &= \frac{-1}{4}\left(  6\psi(1,1/2) -12\gamma\psi(0,1/2) -6\psi(0,1/2)\psi(0,1/2) +1\psi(2,1/2) -2\gamma\psi(1,1/2)  \right. \nonumber \\ &\left. \hspace{1em}
-2\psi(0,1/2)\psi(1,1/2) -6\psi(1,2/3) +12\gamma\psi(0,2/3) +6\psi(0,2/3)\psi(0,2/3)\right) \\
\sum_{k=1}^\infty \frac{H_k}{(2k+1)^2(3k+2)^2} &= \frac{1}{2}\left( -12\psi(1,1/2) +24\gamma\psi(0,1/2) +12\psi(0,1/2)\psi(0,1/2) -1\psi(2,1/2) +2\gamma\psi(1,1/2)  \right. \nonumber \\ &\left. \hspace{1em}
+2\psi(0,1/2)\psi(1,1/2) +12\psi(1,2/3) -24\gamma\psi(0,2/3) -12\psi(0,2/3)\psi(0,2/3)  \right. \nonumber \\ &\left. \hspace{1em}
-1\psi(2,2/3) +2\gamma\psi(1,2/3) +2\psi(0,2/3)\psi(1,2/3)\right) \\
\sum_{k=1}^\infty \frac{H_k}{(2k+1)^2(3k+2)^3} &= \frac{1}{12}\left( -216\psi(1,1/2) +432\gamma\psi(0,1/2) +216\psi(0,1/2)\psi(0,1/2) -12\psi(2,1/2)  \right. \nonumber \\ &\left. \hspace{1em}
+24\gamma\psi(1,1/2) +24\psi(0,1/2)\psi(1,1/2) +216\psi(1,2/3) -432\gamma\psi(0,2/3)  \right. \nonumber \\ &\left. \hspace{1em}
-216\psi(0,2/3)\psi(0,2/3) -24\psi(2,2/3) +48\gamma\psi(1,2/3) +48\psi(0,2/3)\psi(1,2/3)  \right. \nonumber \\ &\left. \hspace{1em}
+1\psi(3,2/3) -2\gamma\psi(2,2/3) -2\psi(0,2/3)\psi(2,2/3) -2\psi(1,2/3)\psi(1,2/3)\right) \\
\sum_{k=1}^\infty \frac{H_k}{(2k+1)^2(4k+1)} &= \frac{1}{8}\left( -4\psi(1,1/2) +8\gamma\psi(0,1/2) +4\psi(0,1/2)\psi(0,1/2) +1\psi(2,1/2) -2\gamma\psi(1,1/2)  \right. \nonumber \\ &\left. \hspace{1em}
-2\psi(0,1/2)\psi(1,1/2) +4\psi(1,1/4) -8\gamma\psi(0,1/4) -4\psi(0,1/4)\psi(0,1/4)\right)
\end{align}
 
\begin{align}
\sum_{k=1}^\infty \frac{H_k}{(2k+1)^2(4k+1)^2} &= \frac{-1}{8}\left( -8\psi(1,1/2) +16\gamma\psi(0,1/2) +8\psi(0,1/2)\psi(0,1/2) +1\psi(2,1/2) -2\gamma\psi(1,1/2)  \right. \nonumber \\ &\left. \hspace{1em}
-2\psi(0,1/2)\psi(1,1/2) +8\psi(1,1/4) -16\gamma\psi(0,1/4) -8\psi(0,1/4)\psi(0,1/4) +1\psi(2,1/4)  \right. \nonumber \\ &\left. \hspace{1em}
-2\gamma\psi(1,1/4) -2\psi(0,1/4)\psi(1,1/4)\right) \\
\sum_{k=1}^\infty \frac{H_k}{(2k+1)^2(4k+1)^3} &= \frac{1}{64}\left( -96\psi(1,1/2) +192\gamma\psi(0,1/2) +96\psi(0,1/2)\psi(0,1/2) +8\psi(2,1/2) -16\gamma\psi(1,1/2)  \right. \nonumber \\ &\left. \hspace{1em}
-16\psi(0,1/2)\psi(1,1/2) +96\psi(1,1/4) -192\gamma\psi(0,1/4) -96\psi(0,1/4)\psi(0,1/4)  \right. \nonumber \\ &\left. \hspace{1em}
+16\psi(2,1/4) -32\gamma\psi(1,1/4) -32\psi(0,1/4)\psi(1,1/4) +1\psi(3,1/4) -2\gamma\psi(2,1/4)  \right. \nonumber \\ &\left. \hspace{1em}
-2\psi(0,1/4)\psi(2,1/4) -2\psi(1,1/4)\psi(1,1/4)\right) \\
\sum_{k=1}^\infty \frac{H_k}{(2k+1)^2(4k+3)} &= \frac{1}{8}\left( -4\psi(1,1/2) +8\gamma\psi(0,1/2) +4\psi(0,1/2)\psi(0,1/2) -1\psi(2,1/2) +2\gamma\psi(1,1/2)  \right. \nonumber \\ &\left. \hspace{1em}
+2\psi(0,1/2)\psi(1,1/2) +4\psi(1,3/4) -8\gamma\psi(0,3/4) -4\psi(0,3/4)\psi(0,3/4)\right) \\
\sum_{k=1}^\infty \frac{H_k}{(2k+1)^2(4k+3)^2} &= \frac{-1}{8}\left(  8\psi(1,1/2) -16\gamma\psi(0,1/2) -8\psi(0,1/2)\psi(0,1/2) +1\psi(2,1/2) -2\gamma\psi(1,1/2)  \right. \nonumber \\ &\left. \hspace{1em}
-2\psi(0,1/2)\psi(1,1/2) -8\psi(1,3/4) +16\gamma\psi(0,3/4) +8\psi(0,3/4)\psi(0,3/4) +1\psi(2,3/4)  \right. \nonumber \\ &\left. \hspace{1em}
-2\gamma\psi(1,3/4) -2\psi(0,3/4)\psi(1,3/4)\right) \\
\sum_{k=1}^\infty \frac{H_k}{(2k+1)^2(4k+3)^3} &= \frac{-1}{64}\left(  96\psi(1,1/2) -192\gamma\psi(0,1/2) -96\psi(0,1/2)\psi(0,1/2) +8\psi(2,1/2) -16\gamma\psi(1,1/2)  \right. \nonumber \\ &\left. \hspace{1em}
-16\psi(0,1/2)\psi(1,1/2) -96\psi(1,3/4) +192\gamma\psi(0,3/4) +96\psi(0,3/4)\psi(0,3/4)  \right. \nonumber \\ &\left. \hspace{1em}
+16\psi(2,3/4) -32\gamma\psi(1,3/4) -32\psi(0,3/4)\psi(1,3/4) -1\psi(3,3/4) +2\gamma\psi(2,3/4)  \right. \nonumber \\ &\left. \hspace{1em}
+2\psi(0,3/4)\psi(2,3/4) +2\psi(1,3/4)\psi(1,3/4)\right) \\
\sum_{k=1}^\infty \frac{H_k}{(2k+1)^3(3k+1)} &= \frac{1}{32}\left( -144\psi(1,1/2) +288\gamma\psi(0,1/2) +144\psi(0,1/2)\psi(0,1/2) +24\psi(2,1/2) -48\gamma\psi(1,1/2)  \right. \nonumber \\ &\left. \hspace{1em}
-48\psi(0,1/2)\psi(1,1/2) -1\psi(3,1/2) +4\gamma\psi(2,1/2) +4\psi(0,1/2)\psi(2,1/2)  \right. \nonumber \\ &\left. \hspace{1em}
+144\psi(1,1/3) -288\gamma\psi(0,1/3) -144\psi(0,1/3)\psi(0,1/3)\right) \\
\sum_{k=1}^\infty \frac{H_k}{(2k+1)^3(3k+1)^2} &= \frac{1}{16}\left(  432\psi(1,1/2) -864\gamma\psi(0,1/2) -432\psi(0,1/2)\psi(0,1/2) -48\psi(2,1/2)  \right. \nonumber \\ &\left. \hspace{1em}
+96\gamma\psi(1,1/2) +96\psi(0,1/2)\psi(1,1/2) +1\psi(3,1/2) -4\gamma\psi(2,1/2)  \right. \nonumber \\ &\left. \hspace{1em}
-4\psi(0,1/2)\psi(2,1/2) -432\psi(1,1/3) +864\gamma\psi(0,1/3) +432\psi(0,1/3)\psi(0,1/3)  \right. \nonumber \\ &\left. \hspace{1em}
-24\psi(2,1/3) +48\gamma\psi(1,1/3) +48\psi(0,1/3)\psi(1,1/3)\right) \\
\sum_{k=1}^\infty \frac{H_k}{(2k+1)^3(3k+1)^3} &= \frac{1}{8}\left( -864\psi(1,1/2) +1728\gamma\psi(0,1/2) +864\psi(0,1/2)\psi(0,1/2) +72\psi(2,1/2)  \right. \nonumber \\ &\left. \hspace{1em}
-144\gamma\psi(1,1/2) -144\psi(0,1/2)\psi(1,1/2) -1\psi(3,1/2) +4\gamma\psi(2,1/2)  \right. \nonumber \\ &\left. \hspace{1em}
+4\psi(0,1/2)\psi(2,1/2) +864\psi(1,1/3) -1728\gamma\psi(0,1/3) -864\psi(0,1/3)\psi(0,1/3)  \right. \nonumber \\ &\left. \hspace{1em}
+72\psi(2,1/3) -144\gamma\psi(1,1/3) -144\psi(0,1/3)\psi(1,1/3) +2\psi(3,1/3) -4\gamma\psi(2,1/3)  \right. \nonumber \\ &\left. \hspace{1em}
-4\psi(0,1/3)\psi(2,1/3) -4\psi(1,1/3)\psi(1,1/3)\right)
\end{align}
 
\begin{align}
\sum_{k=1}^\infty \frac{H_k}{(2k+1)^3(3k+2)} &= \frac{1}{32}\left(  144\psi(1,1/2) -288\gamma\psi(0,1/2) -144\psi(0,1/2)\psi(0,1/2) +24\psi(2,1/2) -48\gamma\psi(1,1/2)  \right. \nonumber \\ &\left. \hspace{1em}
-48\psi(0,1/2)\psi(1,1/2) +1\psi(3,1/2) -4\gamma\psi(2,1/2) -4\psi(0,1/2)\psi(2,1/2)  \right. \nonumber \\ &\left. \hspace{1em}
-144\psi(1,2/3) +288\gamma\psi(0,2/3) +144\psi(0,2/3)\psi(0,2/3)\right) \\
\sum_{k=1}^\infty \frac{H_k}{(2k+1)^3(3k+2)^2} &= \frac{-1}{16}\left( -432\psi(1,1/2) +864\gamma\psi(0,1/2) +432\psi(0,1/2)\psi(0,1/2) -48\psi(2,1/2)  \right. \nonumber \\ &\left. \hspace{1em}
+96\gamma\psi(1,1/2) +96\psi(0,1/2)\psi(1,1/2) -1\psi(3,1/2) +4\gamma\psi(2,1/2)  \right. \nonumber \\ &\left. \hspace{1em}
+4\psi(0,1/2)\psi(2,1/2) +432\psi(1,2/3) -864\gamma\psi(0,2/3) -432\psi(0,2/3)\psi(0,2/3)  \right. \nonumber \\ &\left. \hspace{1em}
-24\psi(2,2/3) +48\gamma\psi(1,2/3) +48\psi(0,2/3)\psi(1,2/3)\right) \\
\sum_{k=1}^\infty \frac{H_k}{(2k+1)^3(3k+2)^3} &= \frac{1}{8}\left(  864\psi(1,1/2) -1728\gamma\psi(0,1/2) -864\psi(0,1/2)\psi(0,1/2) +72\psi(2,1/2)  \right. \nonumber \\ &\left. \hspace{1em}
-144\gamma\psi(1,1/2) -144\psi(0,1/2)\psi(1,1/2) +1\psi(3,1/2) -4\gamma\psi(2,1/2)  \right. \nonumber \\ &\left. \hspace{1em}
-4\psi(0,1/2)\psi(2,1/2) -864\psi(1,2/3) +1728\gamma\psi(0,2/3) +864\psi(0,2/3)\psi(0,2/3)  \right. \nonumber \\ &\left. \hspace{1em}
+72\psi(2,2/3) -144\gamma\psi(1,2/3) -144\psi(0,2/3)\psi(1,2/3) -2\psi(3,2/3) +4\gamma\psi(2,2/3)  \right. \nonumber \\ &\left. \hspace{1em}
+4\psi(0,2/3)\psi(2,2/3) +4\psi(1,2/3)\psi(1,2/3)\right) \\
\sum_{k=1}^\infty \frac{H_k}{(2k+1)^3(4k+1)} &= \frac{-1}{64}\left(  64\psi(1,1/2) -128\gamma\psi(0,1/2) -64\psi(0,1/2)\psi(0,1/2) -16\psi(2,1/2) +32\gamma\psi(1,1/2)  \right. \nonumber \\ &\left. \hspace{1em}
+32\psi(0,1/2)\psi(1,1/2) +1\psi(3,1/2) -4\gamma\psi(2,1/2) -4\psi(0,1/2)\psi(2,1/2)  \right. \nonumber \\ &\left. \hspace{1em}
-64\psi(1,1/4) +128\gamma\psi(0,1/4) +64\psi(0,1/4)\psi(0,1/4)\right) \\
\sum_{k=1}^\infty \frac{H_k}{(2k+1)^3(4k+1)^2} &= \frac{1}{64}\left(  192\psi(1,1/2) -384\gamma\psi(0,1/2) -192\psi(0,1/2)\psi(0,1/2) -32\psi(2,1/2)  \right. \nonumber \\ &\left. \hspace{1em}
+64\gamma\psi(1,1/2) +64\psi(0,1/2)\psi(1,1/2) +1\psi(3,1/2) -4\gamma\psi(2,1/2)  \right. \nonumber \\ &\left. \hspace{1em}
-4\psi(0,1/2)\psi(2,1/2) -192\psi(1,1/4) +384\gamma\psi(0,1/4) +192\psi(0,1/4)\psi(0,1/4)  \right. \nonumber \\ &\left. \hspace{1em}
-16\psi(2,1/4) +32\gamma\psi(1,1/4) +32\psi(0,1/4)\psi(1,1/4)\right) \\
\sum_{k=1}^\infty \frac{H_k}{(2k+1)^3(4k+1)^3} &= \frac{-1}{64}\left(  384\psi(1,1/2) -768\gamma\psi(0,1/2) -384\psi(0,1/2)\psi(0,1/2) -48\psi(2,1/2)  \right. \nonumber \\ &\left. \hspace{1em}
+96\gamma\psi(1,1/2) +96\psi(0,1/2)\psi(1,1/2) +1\psi(3,1/2) -4\gamma\psi(2,1/2)  \right. \nonumber \\ &\left. \hspace{1em}
-4\psi(0,1/2)\psi(2,1/2) -384\psi(1,1/4) +768\gamma\psi(0,1/4) +384\psi(0,1/4)\psi(0,1/4)  \right. \nonumber \\ &\left. \hspace{1em}
-48\psi(2,1/4) +96\gamma\psi(1,1/4) +96\psi(0,1/4)\psi(1,1/4) -2\psi(3,1/4) +4\gamma\psi(2,1/4)  \right. \nonumber \\ &\left. \hspace{1em}
+4\psi(0,1/4)\psi(2,1/4) +4\psi(1,1/4)\psi(1,1/4)\right) \\
\sum_{k=1}^\infty \frac{H_k}{(2k+1)^3(4k+3)} &= \frac{-1}{64}\left( -64\psi(1,1/2) +128\gamma\psi(0,1/2) +64\psi(0,1/2)\psi(0,1/2) -16\psi(2,1/2) +32\gamma\psi(1,1/2)  \right. \nonumber \\ &\left. \hspace{1em}
+32\psi(0,1/2)\psi(1,1/2) -1\psi(3,1/2) +4\gamma\psi(2,1/2) +4\psi(0,1/2)\psi(2,1/2)  \right. \nonumber \\ &\left. \hspace{1em}
+64\psi(1,3/4) -128\gamma\psi(0,3/4) -64\psi(0,3/4)\psi(0,3/4)\right)
\end{align}
 
\begin{align}
\sum_{k=1}^\infty \frac{H_k}{(2k+1)^3(4k+3)^2} &= \frac{-1}{64}\left( -192\psi(1,1/2) +384\gamma\psi(0,1/2) +192\psi(0,1/2)\psi(0,1/2) -32\psi(2,1/2)  \right. \nonumber \\ &\left. \hspace{1em}
+64\gamma\psi(1,1/2) +64\psi(0,1/2)\psi(1,1/2) -1\psi(3,1/2) +4\gamma\psi(2,1/2)  \right. \nonumber \\ &\left. \hspace{1em}
+4\psi(0,1/2)\psi(2,1/2) +192\psi(1,3/4) -384\gamma\psi(0,3/4) -192\psi(0,3/4)\psi(0,3/4)  \right. \nonumber \\ &\left. \hspace{1em}
-16\psi(2,3/4) +32\gamma\psi(1,3/4) +32\psi(0,3/4)\psi(1,3/4)\right) \\
\sum_{k=1}^\infty \frac{H_k}{(2k+1)^3(4k+3)^3} &= \frac{1}{64}\left(  384\psi(1,1/2) -768\gamma\psi(0,1/2) -384\psi(0,1/2)\psi(0,1/2) +48\psi(2,1/2)  \right. \nonumber \\ &\left. \hspace{1em}
-96\gamma\psi(1,1/2) -96\psi(0,1/2)\psi(1,1/2) +1\psi(3,1/2) -4\gamma\psi(2,1/2)  \right. \nonumber \\ &\left. \hspace{1em}
-4\psi(0,1/2)\psi(2,1/2) -384\psi(1,3/4) +768\gamma\psi(0,3/4) +384\psi(0,3/4)\psi(0,3/4)  \right. \nonumber \\ &\left. \hspace{1em}
+48\psi(2,3/4) -96\gamma\psi(1,3/4) -96\psi(0,3/4)\psi(1,3/4) -2\psi(3,3/4) +4\gamma\psi(2,3/4)  \right. \nonumber \\ &\left. \hspace{1em}
+4\psi(0,3/4)\psi(2,3/4) +4\psi(1,3/4)\psi(1,3/4)\right) \\
\sum_{k=1}^\infty \frac{H_k}{(3k+1)(4k+1)} &= \frac{1}{2}\left( -1\psi(1,1/3) +2\gamma\psi(0,1/3) +1\psi(0,1/3)\psi(0,1/3) +1\psi(1,1/4) -2\gamma\psi(0,1/4)  \right. \nonumber \\ &\left. \hspace{1em}
-1\psi(0,1/4)\psi(0,1/4)\right) \\
\sum_{k=1}^\infty \frac{H_k}{(3k+1)(4k+1)^2} &= \frac{-1}{8}\left( -12\psi(1,1/3) +24\gamma\psi(0,1/3) +12\psi(0,1/3)\psi(0,1/3) +12\psi(1,1/4) -24\gamma\psi(0,1/4)  \right. \nonumber \\ &\left. \hspace{1em}
-12\psi(0,1/4)\psi(0,1/4) +1\psi(2,1/4) -2\gamma\psi(1,1/4) -2\psi(0,1/4)\psi(1,1/4)\right) \\
\sum_{k=1}^\infty \frac{H_k}{(3k+1)(4k+1)^3} &= \frac{1}{64}\left( -288\psi(1,1/3) +576\gamma\psi(0,1/3) +288\psi(0,1/3)\psi(0,1/3) +288\psi(1,1/4)  \right. \nonumber \\ &\left. \hspace{1em}
-576\gamma\psi(0,1/4) -288\psi(0,1/4)\psi(0,1/4) +24\psi(2,1/4) -48\gamma\psi(1,1/4)  \right. \nonumber \\ &\left. \hspace{1em}
-48\psi(0,1/4)\psi(1,1/4) +1\psi(3,1/4) -2\gamma\psi(2,1/4) -2\psi(0,1/4)\psi(2,1/4)  \right. \nonumber \\ &\left. \hspace{1em}
-2\psi(1,1/4)\psi(1,1/4)\right) \\
\sum_{k=1}^\infty \frac{H_k}{(3k+1)(4k+3)} &= \frac{1}{10}\left(  1\psi(1,1/3) -2\gamma\psi(0,1/3) -1\psi(0,1/3)\psi(0,1/3) -1\psi(1,3/4) +2\gamma\psi(0,3/4)  \right. \nonumber \\ &\left. \hspace{1em}
+1\psi(0,3/4)\psi(0,3/4)\right) \\
\sum_{k=1}^\infty \frac{H_k}{(3k+1)(4k+3)^2} &= \frac{1}{200}\left(  12\psi(1,1/3) -24\gamma\psi(0,1/3) -12\psi(0,1/3)\psi(0,1/3) -12\psi(1,3/4) +24\gamma\psi(0,3/4)  \right. \nonumber \\ &\left. \hspace{1em}
+12\psi(0,3/4)\psi(0,3/4) +5\psi(2,3/4) -10\gamma\psi(1,3/4) -10\psi(0,3/4)\psi(1,3/4)\right) \\
\sum_{k=1}^\infty \frac{H_k}{(3k+1)(4k+3)^3} &= \frac{-1}{8000}\left( -288\psi(1,1/3) +576\gamma\psi(0,1/3) +288\psi(0,1/3)\psi(0,1/3) +288\psi(1,3/4)  \right. \nonumber \\ &\left. \hspace{1em}
-576\gamma\psi(0,3/4) -288\psi(0,3/4)\psi(0,3/4) -120\psi(2,3/4) +240\gamma\psi(1,3/4)  \right. \nonumber \\ &\left. \hspace{1em}
+240\psi(0,3/4)\psi(1,3/4) +25\psi(3,3/4) -50\gamma\psi(2,3/4) -50\psi(0,3/4)\psi(2,3/4)  \right. \nonumber \\ &\left. \hspace{1em}
-50\psi(1,3/4)\psi(1,3/4)\right)
\end{align}
 
\begin{align}
\sum_{k=1}^\infty \frac{H_k}{(3k+1)^2(4k+1)} &= \frac{1}{6}\left( -12\psi(1,1/3) +24\gamma\psi(0,1/3) +12\psi(0,1/3)\psi(0,1/3) +1\psi(2,1/3) -2\gamma\psi(1,1/3)  \right. \nonumber \\ &\left. \hspace{1em}
-2\psi(0,1/3)\psi(1,1/3) +12\psi(1,1/4) -24\gamma\psi(0,1/4) -12\psi(0,1/4)\psi(0,1/4)\right) \\
\sum_{k=1}^\infty \frac{H_k}{(3k+1)^2(4k+1)^2} &= \frac{1}{2}\left(  24\psi(1,1/3) -48\gamma\psi(0,1/3) -24\psi(0,1/3)\psi(0,1/3) -1\psi(2,1/3) +2\gamma\psi(1,1/3)  \right. \nonumber \\ &\left. \hspace{1em}
+2\psi(0,1/3)\psi(1,1/3) -24\psi(1,1/4) +48\gamma\psi(0,1/4) +24\psi(0,1/4)\psi(0,1/4)  \right. \nonumber \\ &\left. \hspace{1em}
-1\psi(2,1/4) +2\gamma\psi(1,1/4) +2\psi(0,1/4)\psi(1,1/4)\right) \\
\sum_{k=1}^\infty \frac{H_k}{(3k+1)^2(4k+1)^3} &= \frac{-1}{16}\left(  864\psi(1,1/3) -1728\gamma\psi(0,1/3) -864\psi(0,1/3)\psi(0,1/3) -24\psi(2,1/3)  \right. \nonumber \\ &\left. \hspace{1em}
+48\gamma\psi(1,1/3) +48\psi(0,1/3)\psi(1,1/3) -864\psi(1,1/4) +1728\gamma\psi(0,1/4)  \right. \nonumber \\ &\left. \hspace{1em}
+864\psi(0,1/4)\psi(0,1/4) -48\psi(2,1/4) +96\gamma\psi(1,1/4) +96\psi(0,1/4)\psi(1,1/4)  \right. \nonumber \\ &\left. \hspace{1em}
-1\psi(3,1/4) +2\gamma\psi(2,1/4) +2\psi(0,1/4)\psi(2,1/4) +2\psi(1,1/4)\psi(1,1/4)\right) \\
\sum_{k=1}^\infty \frac{H_k}{(3k+1)^2(4k+3)} &= \frac{-1}{150}\left(  12\psi(1,1/3) -24\gamma\psi(0,1/3) -12\psi(0,1/3)\psi(0,1/3) +5\psi(2,1/3) -10\gamma\psi(1,1/3)  \right. \nonumber \\ &\left. \hspace{1em}
-10\psi(0,1/3)\psi(1,1/3) -12\psi(1,3/4) +24\gamma\psi(0,3/4) +12\psi(0,3/4)\psi(0,3/4)\right) \\
\sum_{k=1}^\infty \frac{H_k}{(3k+1)^2(4k+3)^2} &= \frac{-1}{250}\left(  24\psi(1,1/3) -48\gamma\psi(0,1/3) -24\psi(0,1/3)\psi(0,1/3) +5\psi(2,1/3) -10\gamma\psi(1,1/3)  \right. \nonumber \\ &\left. \hspace{1em}
-10\psi(0,1/3)\psi(1,1/3) -24\psi(1,3/4) +48\gamma\psi(0,3/4) +24\psi(0,3/4)\psi(0,3/4)  \right. \nonumber \\ &\left. \hspace{1em}
+5\psi(2,3/4) -10\gamma\psi(1,3/4) -10\psi(0,3/4)\psi(1,3/4)\right) \\
\sum_{k=1}^\infty \frac{H_k}{(3k+1)^2(4k+3)^3} &= \frac{-1}{10000}\left(  864\psi(1,1/3) -1728\gamma\psi(0,1/3) -864\psi(0,1/3)\psi(0,1/3) +120\psi(2,1/3)  \right. \nonumber \\ &\left. \hspace{1em}
-240\gamma\psi(1,1/3) -240\psi(0,1/3)\psi(1,1/3) -864\psi(1,3/4) +1728\gamma\psi(0,3/4)  \right. \nonumber \\ &\left. \hspace{1em}
+864\psi(0,3/4)\psi(0,3/4) +240\psi(2,3/4) -480\gamma\psi(1,3/4) -480\psi(0,3/4)\psi(1,3/4)  \right. \nonumber \\ &\left. \hspace{1em}
-25\psi(3,3/4) +50\gamma\psi(2,3/4) +50\psi(0,3/4)\psi(2,3/4) +50\psi(1,3/4)\psi(1,3/4)\right) \\
\sum_{k=1}^\infty \frac{H_k}{(3k+1)^3(4k+1)} &= \frac{1}{36}\left( -288\psi(1,1/3) +576\gamma\psi(0,1/3) +288\psi(0,1/3)\psi(0,1/3) +24\psi(2,1/3) -48\gamma\psi(1,1/3)  \right. \nonumber \\ &\left. \hspace{1em}
-48\psi(0,1/3)\psi(1,1/3) -1\psi(3,1/3) +2\gamma\psi(2,1/3) +2\psi(0,1/3)\psi(2,1/3)  \right. \nonumber \\ &\left. \hspace{1em}
+2\psi(1,1/3)\psi(1,1/3) +288\psi(1,1/4) -576\gamma\psi(0,1/4) -288\psi(0,1/4)\psi(0,1/4)\right) \\
\sum_{k=1}^\infty \frac{H_k}{(3k+1)^3(4k+1)^2} &= \frac{-1}{12}\left( -864\psi(1,1/3) +1728\gamma\psi(0,1/3) +864\psi(0,1/3)\psi(0,1/3) +48\psi(2,1/3)  \right. \nonumber \\ &\left. \hspace{1em}
-96\gamma\psi(1,1/3) -96\psi(0,1/3)\psi(1,1/3) -1\psi(3,1/3) +2\gamma\psi(2,1/3)  \right. \nonumber \\ &\left. \hspace{1em}
+2\psi(0,1/3)\psi(2,1/3) +2\psi(1,1/3)\psi(1,1/3) +864\psi(1,1/4) -1728\gamma\psi(0,1/4)  \right. \nonumber \\ &\left. \hspace{1em}
-864\psi(0,1/4)\psi(0,1/4) +24\psi(2,1/4) -48\gamma\psi(1,1/4) -48\psi(0,1/4)\psi(1,1/4)\right)
\end{align}
 
\begin{align}
\sum_{k=1}^\infty \frac{H_k}{(3k+1)^3(4k+1)^3} &= \frac{1}{4}\left( -1728\psi(1,1/3) +3456\gamma\psi(0,1/3) +1728\psi(0,1/3)\psi(0,1/3) +72\psi(2,1/3)  \right. \nonumber \\ &\left. \hspace{1em}
-144\gamma\psi(1,1/3) -144\psi(0,1/3)\psi(1,1/3) -1\psi(3,1/3) +2\gamma\psi(2,1/3)  \right. \nonumber \\ &\left. \hspace{1em}
+2\psi(0,1/3)\psi(2,1/3) +2\psi(1,1/3)\psi(1,1/3) +1728\psi(1,1/4) -3456\gamma\psi(0,1/4)  \right. \nonumber \\ &\left. \hspace{1em}
-1728\psi(0,1/4)\psi(0,1/4) +72\psi(2,1/4) -144\gamma\psi(1,1/4) -144\psi(0,1/4)\psi(1,1/4)  \right. \nonumber \\ &\left. \hspace{1em}
+1\psi(3,1/4) -2\gamma\psi(2,1/4) -2\psi(0,1/4)\psi(2,1/4) -2\psi(1,1/4)\psi(1,1/4)\right) \\
\sum_{k=1}^\infty \frac{H_k}{(3k+1)^3(4k+3)} &= \frac{-1}{4500}\left( -288\psi(1,1/3) +576\gamma\psi(0,1/3) +288\psi(0,1/3)\psi(0,1/3) -120\psi(2,1/3)  \right. \nonumber \\ &\left. \hspace{1em}
+240\gamma\psi(1,1/3) +240\psi(0,1/3)\psi(1,1/3) -25\psi(3,1/3) +50\gamma\psi(2,1/3)  \right. \nonumber \\ &\left. \hspace{1em}
+50\psi(0,1/3)\psi(2,1/3) +50\psi(1,1/3)\psi(1,1/3) +288\psi(1,3/4) -576\gamma\psi(0,3/4)  \right. \nonumber \\ &\left. \hspace{1em}
-288\psi(0,3/4)\psi(0,3/4)\right) \\
\sum_{k=1}^\infty \frac{H_k}{(3k+1)^3(4k+3)^2} &= \frac{-1}{7500}\left( -864\psi(1,1/3) +1728\gamma\psi(0,1/3) +864\psi(0,1/3)\psi(0,1/3) -240\psi(2,1/3)  \right. \nonumber \\ &\left. \hspace{1em}
+480\gamma\psi(1,1/3) +480\psi(0,1/3)\psi(1,1/3) -25\psi(3,1/3) +50\gamma\psi(2,1/3)  \right. \nonumber \\ &\left. \hspace{1em}
+50\psi(0,1/3)\psi(2,1/3) +50\psi(1,1/3)\psi(1,1/3) +864\psi(1,3/4) -1728\gamma\psi(0,3/4)  \right. \nonumber \\ &\left. \hspace{1em}
-864\psi(0,3/4)\psi(0,3/4) -120\psi(2,3/4) +240\gamma\psi(1,3/4)  \right. \nonumber \\ &\left. \hspace{1em}
+240\psi(0,3/4)\psi(1,3/4)\right) \\
\sum_{k=1}^\infty \frac{H_k}{(3k+1)^3(4k+3)^3} &= \frac{1}{12500}\left(  1728\psi(1,1/3) -3456\gamma\psi(0,1/3) -1728\psi(0,1/3)\psi(0,1/3) +360\psi(2,1/3)  \right. \nonumber \\ &\left. \hspace{1em}
-720\gamma\psi(1,1/3) -720\psi(0,1/3)\psi(1,1/3) +25\psi(3,1/3) -50\gamma\psi(2,1/3)  \right. \nonumber \\ &\left. \hspace{1em}
-50\psi(0,1/3)\psi(2,1/3) -50\psi(1,1/3)\psi(1,1/3) -1728\psi(1,3/4) +3456\gamma\psi(0,3/4)  \right. \nonumber \\ &\left. \hspace{1em}
+1728\psi(0,3/4)\psi(0,3/4) +360\psi(2,3/4) -720\gamma\psi(1,3/4) -720\psi(0,3/4)\psi(1,3/4)  \right. \nonumber \\ &\left. \hspace{1em}
-25\psi(3,3/4) +50\gamma\psi(2,3/4) +50\psi(0,3/4)\psi(2,3/4) +50\psi(1,3/4)\psi(1,3/4)\right) \\
\sum_{k=1}^\infty \frac{H_k}{(3k+2)(4k+1)} &= \frac{-1}{10}\left(  1\psi(1,2/3) -2\gamma\psi(0,2/3) -1\psi(0,2/3)\psi(0,2/3) -1\psi(1,1/4) +2\gamma\psi(0,1/4)  \right. \nonumber \\ &\left. \hspace{1em}
+1\psi(0,1/4)\psi(0,1/4)\right) \\
\sum_{k=1}^\infty \frac{H_k}{(3k+2)(4k+1)^2} &= \frac{1}{200}\left(  12\psi(1,2/3) -24\gamma\psi(0,2/3) -12\psi(0,2/3)\psi(0,2/3) -12\psi(1,1/4) +24\gamma\psi(0,1/4)  \right. \nonumber \\ &\left. \hspace{1em}
+12\psi(0,1/4)\psi(0,1/4) -5\psi(2,1/4) +10\gamma\psi(1,1/4) +10\psi(0,1/4)\psi(1,1/4)\right) \\
\sum_{k=1}^\infty \frac{H_k}{(3k+2)(4k+1)^3} &= \frac{1}{8000}\left( -288\psi(1,2/3) +576\gamma\psi(0,2/3) +288\psi(0,2/3)\psi(0,2/3) +288\psi(1,1/4)  \right. \nonumber \\ &\left. \hspace{1em}
-576\gamma\psi(0,1/4) -288\psi(0,1/4)\psi(0,1/4) +120\psi(2,1/4) -240\gamma\psi(1,1/4)  \right. \nonumber \\ &\left. \hspace{1em}
-240\psi(0,1/4)\psi(1,1/4) +25\psi(3,1/4) -50\gamma\psi(2,1/4) -50\psi(0,1/4)\psi(2,1/4)  \right. \nonumber \\ &\left. \hspace{1em}
-50\psi(1,1/4)\psi(1,1/4)\right)
\end{align}
 
\begin{align}
\sum_{k=1}^\infty \frac{H_k}{(3k+2)(4k+3)} &= \frac{1}{2}\left(  1\psi(1,2/3) -2\gamma\psi(0,2/3) -1\psi(0,2/3)\psi(0,2/3) -1\psi(1,3/4) +2\gamma\psi(0,3/4)  \right. \nonumber \\ &\left. \hspace{1em}
+1\psi(0,3/4)\psi(0,3/4)\right) \\
\sum_{k=1}^\infty \frac{H_k}{(3k+2)(4k+3)^2} &= \frac{1}{8}\left(  12\psi(1,2/3) -24\gamma\psi(0,2/3) -12\psi(0,2/3)\psi(0,2/3) -12\psi(1,3/4) +24\gamma\psi(0,3/4)  \right. \nonumber \\ &\left. \hspace{1em}
+12\psi(0,3/4)\psi(0,3/4) +1\psi(2,3/4) -2\gamma\psi(1,3/4) -2\psi(0,3/4)\psi(1,3/4)\right) \\
\sum_{k=1}^\infty \frac{H_k}{(3k+2)(4k+3)^3} &= \frac{1}{64}\left(  288\psi(1,2/3) -576\gamma\psi(0,2/3) -288\psi(0,2/3)\psi(0,2/3) -288\psi(1,3/4)  \right. \nonumber \\ &\left. \hspace{1em}
+576\gamma\psi(0,3/4) +288\psi(0,3/4)\psi(0,3/4) +24\psi(2,3/4) -48\gamma\psi(1,3/4)  \right. \nonumber \\ &\left. \hspace{1em}
-48\psi(0,3/4)\psi(1,3/4) -1\psi(3,3/4) +2\gamma\psi(2,3/4) +2\psi(0,3/4)\psi(2,3/4)  \right. \nonumber \\ &\left. \hspace{1em}
+2\psi(1,3/4)\psi(1,3/4)\right) \\
\sum_{k=1}^\infty \frac{H_k}{(3k+2)^2(4k+1)} &= \frac{-1}{150}\left(  12\psi(1,2/3) -24\gamma\psi(0,2/3) -12\psi(0,2/3)\psi(0,2/3) -5\psi(2,2/3) +10\gamma\psi(1,2/3)  \right. \nonumber \\ &\left. \hspace{1em}
+10\psi(0,2/3)\psi(1,2/3) -12\psi(1,1/4) +24\gamma\psi(0,1/4) +12\psi(0,1/4)\psi(0,1/4)\right) \\
\sum_{k=1}^\infty \frac{H_k}{(3k+2)^2(4k+1)^2} &= \frac{1}{250}\left(  24\psi(1,2/3) -48\gamma\psi(0,2/3) -24\psi(0,2/3)\psi(0,2/3) -5\psi(2,2/3) +10\gamma\psi(1,2/3)  \right. \nonumber \\ &\left. \hspace{1em}
+10\psi(0,2/3)\psi(1,2/3) -24\psi(1,1/4) +48\gamma\psi(0,1/4) +24\psi(0,1/4)\psi(0,1/4)  \right. \nonumber \\ &\left. \hspace{1em}
-5\psi(2,1/4) +10\gamma\psi(1,1/4) +10\psi(0,1/4)\psi(1,1/4)\right) \\
\sum_{k=1}^\infty \frac{H_k}{(3k+2)^2(4k+1)^3} &= \frac{-1}{10000}\left(  864\psi(1,2/3) -1728\gamma\psi(0,2/3) -864\psi(0,2/3)\psi(0,2/3) -120\psi(2,2/3)  \right. \nonumber \\ &\left. \hspace{1em}
+240\gamma\psi(1,2/3) +240\psi(0,2/3)\psi(1,2/3) -864\psi(1,1/4) +1728\gamma\psi(0,1/4)  \right. \nonumber \\ &\left. \hspace{1em}
+864\psi(0,1/4)\psi(0,1/4) -240\psi(2,1/4) +480\gamma\psi(1,1/4) +480\psi(0,1/4)\psi(1,1/4)  \right. \nonumber \\ &\left. \hspace{1em}
-25\psi(3,1/4) +50\gamma\psi(2,1/4) +50\psi(0,1/4)\psi(2,1/4) +50\psi(1,1/4)\psi(1,1/4)\right) \\
\sum_{k=1}^\infty \frac{H_k}{(3k+2)^2(4k+3)} &= \frac{1}{6}\left( -12\psi(1,2/3) +24\gamma\psi(0,2/3) +12\psi(0,2/3)\psi(0,2/3) -1\psi(2,2/3) +2\gamma\psi(1,2/3)  \right. \nonumber \\ &\left. \hspace{1em}
+2\psi(0,2/3)\psi(1,2/3) +12\psi(1,3/4) -24\gamma\psi(0,3/4) -12\psi(0,3/4)\psi(0,3/4)\right) \\
\sum_{k=1}^\infty \frac{H_k}{(3k+2)^2(4k+3)^2} &= \frac{-1}{2}\left(  24\psi(1,2/3) -48\gamma\psi(0,2/3) -24\psi(0,2/3)\psi(0,2/3) +1\psi(2,2/3) -2\gamma\psi(1,2/3)  \right. \nonumber \\ &\left. \hspace{1em}
-2\psi(0,2/3)\psi(1,2/3) -24\psi(1,3/4) +48\gamma\psi(0,3/4) +24\psi(0,3/4)\psi(0,3/4)  \right. \nonumber \\ &\left. \hspace{1em}
+1\psi(2,3/4) -2\gamma\psi(1,3/4) -2\psi(0,3/4)\psi(1,3/4)\right) \\
\sum_{k=1}^\infty \frac{H_k}{(3k+2)^2(4k+3)^3} &= \frac{-1}{16}\left(  864\psi(1,2/3) -1728\gamma\psi(0,2/3) -864\psi(0,2/3)\psi(0,2/3) +24\psi(2,2/3)  \right. \nonumber \\ &\left. \hspace{1em}
-48\gamma\psi(1,2/3) -48\psi(0,2/3)\psi(1,2/3) -864\psi(1,3/4) +1728\gamma\psi(0,3/4)  \right. \nonumber \\ &\left. \hspace{1em}
+864\psi(0,3/4)\psi(0,3/4) +48\psi(2,3/4) -96\gamma\psi(1,3/4) -96\psi(0,3/4)\psi(1,3/4)  \right. \nonumber \\ &\left. \hspace{1em}
-1\psi(3,3/4) +2\gamma\psi(2,3/4) +2\psi(0,3/4)\psi(2,3/4) +2\psi(1,3/4)\psi(1,3/4)\right)
\end{align}
 
\begin{align}
\sum_{k=1}^\infty \frac{H_k}{(3k+2)^3(4k+1)} &= \frac{1}{4500}\left( -288\psi(1,2/3) +576\gamma\psi(0,2/3) +288\psi(0,2/3)\psi(0,2/3) +120\psi(2,2/3)  \right. \nonumber \\ &\left. \hspace{1em}
-240\gamma\psi(1,2/3) -240\psi(0,2/3)\psi(1,2/3) -25\psi(3,2/3) +50\gamma\psi(2,2/3)  \right. \nonumber \\ &\left. \hspace{1em}
+50\psi(0,2/3)\psi(2,2/3) +50\psi(1,2/3)\psi(1,2/3) +288\psi(1,1/4) -576\gamma\psi(0,1/4)  \right. \nonumber \\ &\left. \hspace{1em}
-288\psi(0,1/4)\psi(0,1/4)\right) \\
\sum_{k=1}^\infty \frac{H_k}{(3k+2)^3(4k+1)^2} &= \frac{1}{7500}\left(  864\psi(1,2/3) -1728\gamma\psi(0,2/3) -864\psi(0,2/3)\psi(0,2/3) -240\psi(2,2/3)  \right. \nonumber \\ &\left. \hspace{1em}
+480\gamma\psi(1,2/3) +480\psi(0,2/3)\psi(1,2/3) +25\psi(3,2/3) -50\gamma\psi(2,2/3)  \right. \nonumber \\ &\left. \hspace{1em}
-50\psi(0,2/3)\psi(2,2/3) -50\psi(1,2/3)\psi(1,2/3) -864\psi(1,1/4) +1728\gamma\psi(0,1/4)  \right. \nonumber \\ &\left. \hspace{1em}
+864\psi(0,1/4)\psi(0,1/4) -120\psi(2,1/4) +240\gamma\psi(1,1/4)  \right. \nonumber \\ &\left. \hspace{1em}
+240\psi(0,1/4)\psi(1,1/4)\right) \\
\sum_{k=1}^\infty \frac{H_k}{(3k+2)^3(4k+1)^3} &= \frac{1}{12500}\left( -1728\psi(1,2/3) +3456\gamma\psi(0,2/3) +1728\psi(0,2/3)\psi(0,2/3) +360\psi(2,2/3)  \right. \nonumber \\ &\left. \hspace{1em}
-720\gamma\psi(1,2/3) -720\psi(0,2/3)\psi(1,2/3) -25\psi(3,2/3) +50\gamma\psi(2,2/3)  \right. \nonumber \\ &\left. \hspace{1em}
+50\psi(0,2/3)\psi(2,2/3) +50\psi(1,2/3)\psi(1,2/3) +1728\psi(1,1/4) -3456\gamma\psi(0,1/4)  \right. \nonumber \\ &\left. \hspace{1em}
-1728\psi(0,1/4)\psi(0,1/4) +360\psi(2,1/4) -720\gamma\psi(1,1/4) -720\psi(0,1/4)\psi(1,1/4)  \right. \nonumber \\ &\left. \hspace{1em}
+25\psi(3,1/4) -50\gamma\psi(2,1/4) -50\psi(0,1/4)\psi(2,1/4) -50\psi(1,1/4)\psi(1,1/4)\right) \\
\sum_{k=1}^\infty \frac{H_k}{(3k+2)^3(4k+3)} &= \frac{1}{36}\left(  288\psi(1,2/3) -576\gamma\psi(0,2/3) -288\psi(0,2/3)\psi(0,2/3) +24\psi(2,2/3) -48\gamma\psi(1,2/3)  \right. \nonumber \\ &\left. \hspace{1em}
-48\psi(0,2/3)\psi(1,2/3) +1\psi(3,2/3) -2\gamma\psi(2,2/3) -2\psi(0,2/3)\psi(2,2/3)  \right. \nonumber \\ &\left. \hspace{1em}
-2\psi(1,2/3)\psi(1,2/3) -288\psi(1,3/4) +576\gamma\psi(0,3/4) +288\psi(0,3/4)\psi(0,3/4)\right) \\
\sum_{k=1}^\infty \frac{H_k}{(3k+2)^3(4k+3)^2} &= \frac{1}{12}\left(  864\psi(1,2/3) -1728\gamma\psi(0,2/3) -864\psi(0,2/3)\psi(0,2/3) +48\psi(2,2/3)  \right. \nonumber \\ &\left. \hspace{1em}
-96\gamma\psi(1,2/3) -96\psi(0,2/3)\psi(1,2/3) +1\psi(3,2/3) -2\gamma\psi(2,2/3)  \right. \nonumber \\ &\left. \hspace{1em}
-2\psi(0,2/3)\psi(2,2/3) -2\psi(1,2/3)\psi(1,2/3) -864\psi(1,3/4) +1728\gamma\psi(0,3/4)  \right. \nonumber \\ &\left. \hspace{1em}
+864\psi(0,3/4)\psi(0,3/4) +24\psi(2,3/4) -48\gamma\psi(1,3/4) -48\psi(0,3/4)\psi(1,3/4)\right) \\
\sum_{k=1}^\infty \frac{H_k}{(3k+2)^3(4k+3)^3} &= \frac{1}{4}\left(  1728\psi(1,2/3) -3456\gamma\psi(0,2/3) -1728\psi(0,2/3)\psi(0,2/3) +72\psi(2,2/3)  \right. \nonumber \\ &\left. \hspace{1em}
-144\gamma\psi(1,2/3) -144\psi(0,2/3)\psi(1,2/3) +1\psi(3,2/3) -2\gamma\psi(2,2/3)  \right. \nonumber \\ &\left. \hspace{1em}
-2\psi(0,2/3)\psi(2,2/3) -2\psi(1,2/3)\psi(1,2/3) -1728\psi(1,3/4) +3456\gamma\psi(0,3/4)  \right. \nonumber \\ &\left. \hspace{1em}
+1728\psi(0,3/4)\psi(0,3/4) +72\psi(2,3/4) -144\gamma\psi(1,3/4) -144\psi(0,3/4)\psi(1,3/4)  \right. \nonumber \\ &\left. \hspace{1em}
-1\psi(3,3/4) +2\gamma\psi(2,3/4) +2\psi(0,3/4)\psi(2,3/4) +2\psi(1,3/4)\psi(1,3/4)\right)
\end{align}

\end{document}